\newtheorem{theorem}{Theorem}[section]
\newtheorem*{theorem*}{Theorem}
\newtheorem{lemma}[theorem]{Lemma}
\newtheorem{definition}[theorem]{Definition}
\newtheorem*{definition*}{Definition}
\newtheorem{proposition}[theorem]{Proposition}
\numberwithin{equation}{section}
\theoremstyle{definition}
\newtheorem{remark}[theorem]{Remark}
\newtheorem{example}[theorem]{Example}
\newcommand\restr[2]{{
  \left.\kern-\nulldelimiterspace 
  #1 
  \vphantom{\big|} 
  \right|_{#2} 
  }}
\renewcommand{\mathbb}{\mathbbm}                     
\renewcommand{\epsilon}{\varepsilon}                 
\renewcommand{\phi}{\varphi}
\renewcommand{\theta}{\vartheta}
\renewcommand{\le}{\leqslant}
\renewcommand{\ge}{\geqslant}
\newcommand{\abs}[1]{\left\lvert #1 \right\rvert}    
\newcommand{\norm}[1]{\left\lVert #1 \right\rVert}   
\newcommand{\scapro}[2]{\langle #1,#2\rangle}
\newcommand{\R}{\mathbb{R}}
\newcommand{\Rp}{\mathbb{R}_{+}}
\newcommand{\N}{\mathbb{N}}
\DeclareMathOperator{\Borel}{\mathscr{B}}     
\DeclareMathOperator{\1}{\mathbbm 1}
\renewcommand{\O}{\mathcal{O}}   
\newcommand{\D}{\mathcal{D}}      
\newcommand{\leb}{\text{\rm leb}} 
\title{Modelling L\'evy space-time white noises}
\date{9 July 2019}
\author{Matthew Griffiths}
\author{Markus Riedle}
\affil{Department of Mathematics \\ King's College  \\ London WC2R 2LS\\ United Kingdom}
\begin{document}
\maketitle
\begin{abstract}
Based on the theory of independently scattered random measures, we introduce a natural generalisation of Gaussian space-time white noise to a L\'evy-type setting, which we call L\'evy-valued random measures. We determine the subclass of cylindrical L\'evy processes which  correspond to  L\'evy-valued random measures, and describe the elements of this subclass uniquely by their characteristic function. We embed the L\'evy-valued random measure, or the corresponding cylindrical L\'evy process, in the space of general and tempered distributions. For the latter case, we show that this embedding is possible if and only if a certain integrability condition is satisfied. Similar to existing definitions, we introduce L\'evy-valued additive sheets, and show that integrating a L\'evy-valued random measure in space defines a L\'evy-valued additive sheet. This relation is manifested by the result, that  a L\'evy-valued random measure can be viewed as the weak derivative of a L\'evy-valued additive sheet in the space of distributions. 
\end{abstract}

\noindent
{\bf AMS 2010 Subject Classification:}  60G20, 60G57, 60G60,  60G20, 60G51. \\
{\bf Keywords and Phrases:}  space-time white noise; additive sheets; cylindrical processes; random measures. \\

\section{Introduction}

Gaussian random perturbations of partial differential equations are most often modelled either as a cylindrical Brownian motion or a Gaussian space-time white noise. The choice usually depends on the exploited method, in which one follows either a semi-group approach, based on the work by Da Prato and Zabczyk in \cite{DaPrato2014},  or a random field approach, originating from the work by Walsh in \cite{Walsh1986}. It is well known that both models essentially result in the same dynamics as established  by Dalang and Quer-Sardanyons in \cite{Dalang2011}. 

Cylindrical Brownian motions can be naturally generalised to cylindrical L\'evy processes by exploiting the theory of cylindrical measures and random variables. This was accomplished by one of us together with Applebaum in \cite{Applebaum2010}. In the random field approach, Gaussian space-time white noise is usually generalised to a L\'evy-type setting by utilising either a random measure approach or a L\'evy-It{\^o} decomposition. However, a variety of definitions are used for both approaches in the literature.  

The objectives of our work are the comparison of cylindrical L\'evy processes 
with L\'evy-type models in the random field approach, and the embedding of these models in the space of general and tempered (Schwartz) distributions. It is well known in the Gaussian setting, that the standard cylindrical Brownian motion corresponds to the standard Gaussian space-time white noise, see e.g.\ Kallianpur and Xiong \cite{Kallianpur1995}, and that Gaussian space-time white noise can be viewed as a tempered distribution, see e.g. Gel'fand and Vilenkin \cite{Gel'fand-Vilenkin}. We show in this work that  these results significantly differ in the L\'evy-type setting. 

For having a sufficiently general model but distinguishing the time domain, we suggest the definition of {\em L\'evy-valued random measures}, which covers many of the existing L\'evy-type models in the random field approach in the literature, and which is based on independently scattered (or completely) random measures as investigated  by Rajput and Rosinski in \cite{Rajput1989}. It can be seen that this definition naturally extends the usual definition of Gaussian space-time white noise; see Remark \ref{re.Gauss-and-Levy-white-noise}. We provide a rigorous formulation of the relation between L\'evy-valued random measures and models of L\'evy-type space-time noise assuming a L\'evy-It{\^o} decomposition. 

We show that L\'evy-valued random measures correspond to members of a subspace of cylindrical L\'evy processes, and vice-versa. Because the restriction to a subspace is due to the assumed property of independent scattering for  random measures, we introduce the analogue property for cylindrical L\'evy processes. In contrast to the Gaussian case, where essentially only the standard cylindrical Brownian motion is independently scattered, the subspace of independently scattered cylindrical L\'evy processes is varied.  We completely characterise the sub-class of independently scattered cylindrical L\'evy processes by their particular form of their characteristic function. 

Furthermore, we introduce L\'evy-valued additive sheets based on the classical work by Adler et al \cite{Adler1983} and its extension by Dalang and Humeau \cite{Dalang2015}. We establish the relation between L\'evy-valued random measures and additive sheets, which is given by the integration of the L\'evy-valued random measure in space.  This relation is established to be one-to-one for  L\'evy-valued random measures without fixed point of discontinuity in space.

We embed L\'evy-valued random measures and, due to the aforementioned correspondence, independently scattered cylindrical L\'evy processes, in the space of distributions and tempered distributions. Although the embedding in the former case is possible for all L\'evy-valued random measures, L\'evy-valued random measures are tempered distributions if and only if they satisfy a certain integrability condition. In both cases, we show that the embedded cylindrical L\'evy process induces a genuine L\'evy process in the space of general or tempered distributions, i.e.\ a regularisation in the sense of It{\^o} and Nawata \cite{Ito-Nawata}.  

Embedding L\'evy-valued random measures in the space of distributions relates them to a model of L\'evy white noise in the space of distributions, investigated in a series of papers by Dalang, Humeau, Unser and co-authors in  e.g.\ \cite{Aziznejad2018,Dalang2015,Fageot2016,Fageot2017a}. Their model of noise is initiated from research on developing sparse statistical models for signal and image processing. However, the two models result in the same object only under the restrictive assumption that the L\'evy-valued random measure is stationary  in space. 
Similar questions as in our work are addressed in \cite{Dalang2015}  and  \cite{Fageot2017a} for the L\'evy white noise in the space of distributions. 

Combining our results with the work \cite{Aziznejad2018} by Aziznejad, Fageot and Unser enables us to determine the Besov space, in which the paths of a cylindrical L\'evy process attain values. This conclusion is so far only possible for independently scattered L\'evy processes which are stationary in space, due to the assumptions in \cite{Aziznejad2018}. Nevertheless, it indicates a potential reasoning for the often observed phenomena of irregular trajectories of solutions of heat equations driven by cylindrical L\'evy processes, e.g.\ in Brze\'zniak and Zabczyk \cite{Brzezniak2010} and Priola and Zabczyk \cite{Priola2011}; see Example  \ref{ex.alpha-in-Besov}.

Our article starts with Section \ref{subsec:IDRM} on L\'evy-valued random measures. Here, we introduce the definition, provide its relation to models described by a L\'evy-It{\^o} decomposition, and derive its embedding in the space of general and tempered distributions. In Section \ref{se.additive-sheets}, we introduce L\'evy-valued additive sheets and establish their correspondence to L\'evy-valued random measures. Finally, in the last section, we provide the relation between cylindrical L\'evy processes and L\'evy-valued random measures, and investigate the subspace of independently scattered cylindrical L\'evy processes. \\

{\bf Notation: } for a Borel set $\O\subseteq \R^d$ we denote the Borel $\sigma$-algebra by $\Borel(\O)$  and define the $\delta$-ring
\begin{align*}
\Borel_b(\O):=\left\{A\in\Borel(\O)\colon A \text{ is relatively compact}\right\}.
\end{align*}
The Lebesgue measure on $\Borel(\R^d)$ is denoted by $\leb$. The closed unit ball in $\R^d$ is denoted by $B_{\R^d}$. 

Throughout the paper, we fix a probability space $(\Omega,{\mathcal A},P)$. The space of $P$-equivalence classes of 
measurable functions $f\colon \Omega\to\R$ is denoted by $L^0(\Omega, P)$, and of 
$p$-th integrable functions by $L^p(\Omega, P)$ for $p>0$. These spaces are equipped with their standard metrics and (quasi-)norms. 


\section{L\'evy-valued random measures}
\label{subsec:IDRM}

Our definition of L\'evy-valued random measures is based on the work \cite{Rajput1989}
by Rajput and Rosinski. Instead of general $\delta$-rings, it is sufficient for us to restrict ourselves to the $\delta$-ring $\Borel_b(\O)$ of all relatively compact subsets of the Borel set $\O\in\Borel(\R^d)$ as the domain of the random measures.  
\begin{definition}
	\label{def:LevyTypeMeasure}
	A map $M\colon \Borel_b(\O)\to L^0(\Omega,P)$ is called an {\em independently scattered random measure} on $\Borel_b(\O)$ if for each collection of disjoint sets $A_1,A_2,\ldots \in\Borel_b(\O)$ the following hold:
	\begin{enumerate}
	\item[\rm{(a)}]  the random variables $M(A_1),\, M(A_2),\ldots$ are independent;
	\item[\rm{(b)}] if $\displaystyle\bigcup_{k\in\mathbb{N}}A_k \in\Borel_b(\O)$ then 
		$\displaystyle 
		M\left(\bigcup_{k\in\mathbb{N}}A_k\right) = \sum_{k\in\mathbb{N}}M(A_k)\;\; P$-a.s.
	\end{enumerate}
	An independently scattered random measure $M$ is called {\em infinitely divisible} 
	if 
	\begin{enumerate}
	\item[{\rm (c)}] the random variable $M(A)$ is infinitely divisible for each $A\in\Borel_b(\O)$. 
	\end{enumerate}
Analogously, an independently scattered random measure is called {\em Gaussian} (or {\em Poisson}), 
if $M(A)$ is Gaussian (or Poisson) distributed for each $A\in \Borel_b(\O)$. 
\end{definition}

In this setting, it is shown in \cite{Rajput1989} that there exist 
\begin{enumerate}
\item[{\rm (1)}] a signed measure 
	$\gamma\colon \Borel_b(\O) \to \R$,
\item[{\rm (2)}]  a measure
	$\Sigma\colon \Borel_b(\O) \to \Rp$,
\item[{\rm (3)}] a $\sigma$-finite measure
	$\nu\colon \Borel(\O\times\R) \to [0,\infty]$,
\end{enumerate}
such that for each $A\in\Borel_b(\O)$ the characteristics of $M(A)$ is given by $(\gamma(A), \Sigma(A), \nu_{A})$, where the L\'evy measure $\nu_A$ on $\Borel(\R)$ is defined by $\nu_A(\cdot):=\nu(A\times \cdot)$. We call the triple $(\gamma,\Sigma,\nu)$ the \textit{characteristics of $M$}. Furthermore, we may extend the total variation $\norm{\gamma}_{{\rm TV}}$ of $\gamma$ and $\Sigma$ to $\sigma$-finite measures on $\Borel(\O)$. In this case, 
the mapping 
\[
	\lambda\colon\Borel(\O)\to [0,\infty],\qquad     
	\lambda(A) = \norm{\gamma}_{{\rm TV}}(A) + \Sigma(A) + \int_{\mathbb{R}} (\abs{y}^2 \wedge 1) \,\nu(A,\mathrm{d}y),
\]
defines a $\sigma$-finite measure, which is called the {\em control measure of $M$}. We note that $\lambda(A)<\infty$ for $A\in\Borel_b(\O)$. The control measure $\lambda$ is called {\em atomless} if $\lambda(\{x\})=0$ for all $x\in\O$.

We extend Definition \ref{def:LevyTypeMeasure} to include a dynamical aspect, i.e.\ a time variable. This extension can be thought of as a similar construction to that of Walsh in \cite{Walsh1986}.
\begin{definition}
	\label{def:LWNRM}
	A family $(M(t):t\geq 0)$ of infinitely divisible random measures $M(t)$ on $\Borel_b(\O)$ is called a {\em L\'evy-valued random measure on $\Borel_b(\O)$} if, for every $A_1,\dots, A_n\in\Borel_b(\O)$ and $n\in\N$, the stochastic process 
	\begin{align*}
		\big((M(t)(A_1),\dots, M(t)(A_n)):\, t\ge 0\big)
	\end{align*}
	is a L\'evy process in $\R^n$. We shall write $M(t,A):=M(t)(A)$.
\end{definition}

Let $(M(t):t\geq 0)$ be a L\'evy-valued random measure on $\Borel_b(\O)$, and suppose $(\gamma,\Sigma,\nu)$ and $\lambda$ are the characteristics and control measure, respectively, of the infinitely divisible random measure $M(1)$. Then, it follows from the stationary increments of the process $(M(t,A):\, t\ge 0)$  that for each $t\geq 0$ the characteristics of the infinitely divisible random measure $M(t,A )$ is given by $(t\gamma,t\Sigma,t\nu)$, and the control measure of $M(t)$ is given by $t\lambda$. We shall refer to $(\gamma,\Sigma,\nu)$ as the {\em characteristics  of $M$} and $\lambda$ as the {\em control measure of $M$}. 

\begin{proposition}\hfill
\label{prop:LVRMisISRM}
	\begin{enumerate}
	\item[{\rm (a)}] Let $M=(M(t):t\geq 0)$ be a L\'evy-valued random measure on $\Borel_b(\O)$. Then, there exists a unique infinitely divisible random measure $M'$ on $\Borel_b(\Rp\times \O)$ such that
		\begin{align*}
			M'((0,t]\times A)=M(t,A) \qquad \text{for each } t> 0,\,A\in \Borel_b(\O).
		\end{align*}
	\item[{\rm (b)}] Each infinitely divisible random measure $M'$ on 
		$\Borel_b(\Rp\times \O)$ with control measure $\lambda=\leb\otimes \lambda_0$ 
		for a $\sigma$-finite measure $\lambda_0$ on $\Borel(\O)$ defines by 
		\begin{align*}
			M(t,A):=M'((0,t]\times A)
			\qquad\text{for each } t> 0,\,A\in \Borel_b(\O),
		\end{align*}
		a L\'evy-valued random measure on $\Borel_b(\O)$. 
	\end{enumerate}
\end{proposition}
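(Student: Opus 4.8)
For part (a), the plan is to build $M'$ explicitly from $M$ rather than to appeal only to the existence theorem for independently scattered random measures in \cite{Rajput1989}, since the latter fixes $M'$ only in law, whereas the assertion demands the pathwise identity $M'((0,t]\times A)=M(t,A)$. I would start on the ring $\mathcal R$ generated by the time--space rectangles $(s,t]\times A$ with $0\le s<t$ and $A\in\Borel_b(\O)$, setting
\[
	M'((s,t]\times A):=M(t,A)-M(s,A),
\]
and extending additively to finite disjoint unions. That this is well defined and finitely additive follows by refining any two representations to a common grid and using additivity of $A\mapsto M(t,A)$ together with the telescoping identity for increments in $t$.

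The next step is to verify the two defining properties of Definition \ref{def:LevyTypeMeasure} for $M'$ on $\mathcal R$. For independence on disjoint rectangles one splits into two cases: rectangles separated in time are handled by the independence of increments of the $\R^n$-valued L\'evy process in Definition \ref{def:LWNRM}; rectangles with disjoint spatial bases $A_1\cap A_2=\emptyset$ are handled by observing that the law of $(M(t,A_1),M(t,A_2))$ factorises for every $t$ by property (a) of Definition \ref{def:LevyTypeMeasure}, which forces the L\'evy exponent of this planar L\'evy process to split additively, so that the two coordinate processes are independent. Every pair of disjoint rectangles falls into one of these cases, and mutual independence of finitely many follows along the same grid.

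The technically heaviest step in (a) is the passage from $\mathcal R$ to the generated $\delta$-ring $\Borel_b(\Rp\times\O)$. Here I would prove that $M'$ is continuous at $\emptyset$ in probability: if $E_n\downarrow\emptyset$ in $\mathcal R$ then $(\leb\otimes\lambda)(E_n)\to0$, and since the characteristic function of $M'(E_n)$ is controlled by this quantity, it tends to $1$, whence $M'(E_n)\to0$ in probability. Combined with finite additivity and independent scattering this yields $\sigma$-additivity on $\mathcal R$, and the extension argument underlying the construction in \cite{Rajput1989} then produces a unique $\sigma$-additive extension to $\Borel_b(\Rp\times\O)$. Infinite divisibility of each $M'(E)$ is inherited from the rectangles, since increments are infinitely divisible and finite independent sums and limits in probability preserve this; matching the characteristics then identifies them as $\leb\otimes\gamma$, $\leb\otimes\Sigma$, $\leb\otimes\nu$, so the control measure of $M'$ is the product $\leb\otimes\lambda$. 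Uniqueness is immediate, since the relation $M'((0,t]\times A)=M(t,A)$ fixes the values on rectangles, hence on $\mathcal R$, hence everywhere by continuity.

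For part (b) I would set $M(t,A):=M'((0,t]\times A)$ and first note that for fixed $t$ the family $A\mapsto M(t,A)$ inherits independence, $\sigma$-additivity and infinite divisibility directly from $M'$, since disjoint $A_k$ give disjoint sets $(0,t]\times A_k$; thus each $M(t)$ is an infinitely divisible random measure. It then remains to show that $((M(t,A_1),\dots,M(t,A_n)):t\ge 0)$ is a L\'evy process. The start at $0$, independence of increments (increments over disjoint time intervals correspond to the disjoint slabs $(t_{k-1},t_k]\times A_j$, independent by independent scattering of $M'$) and stochastic continuity (continuity of $M'$ at $\emptyset$ applied to $(s,t]\times A\downarrow\emptyset$, using $(\leb\otimes\lambda_0)((s,t]\times A)=\leb((s,t])\,\lambda_0(A)\to0$) all follow readily. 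The step I expect to be the genuine obstacle is stationarity of increments: one must show that the law of $M'((s,t]\times A)$ depends on $(s,t]$ only through its length $t-s$. This is exactly where the product form $\leb\otimes\lambda_0$ of the control measure is indispensable; the plan is to pass to the characteristic triple of $M'$, which by \cite{Rajput1989} is absolutely continuous with respect to the control measure, and to argue that the product structure lets one choose the associated densities independent of the time coordinate, so that the characteristics of $M'((s,t]\times A)$ coincide with those of $M'((0,t-s]\times A)$. Ensuring that the three components of the characteristics genuinely combine into this time-homogeneous form, rather than merely summing to a product, is the delicate heart of the argument.
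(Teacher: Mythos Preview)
Your approach to part~(a) is essentially the paper's: define $M'$ on the ring of finite unions of rectangles $(s,t]\times A$, bound the characteristic exponent of $M'(B)$ by the finitely additive quantity built from $\leb\otimes\norm{\gamma}_{\rm TV}$, $\leb\otimes\Sigma$, $\leb\otimes\nu$ to get continuity at $\emptyset$, and extend. The paper invokes \cite[Theorem~2.15]{Kallenberg2017} rather than the construction in \cite{Rajput1989} for the extension step, but the content is identical; your more explicit discussion of independence on disjoint rectangles is fine and simply fills in what the paper leaves implicit.

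For part~(b) your instinct is right, and the plan you sketch would in fact fail. The step you single out as ``the delicate heart of the argument''---that the product form $\lambda=\leb\otimes\lambda_0$ forces the Rajput--Rosinski densities $a$, $g$, $\rho$ to be independent of the time coordinate---is not true in general, because $\lambda$ is only the \emph{sum} $\norm{\gamma}_{\rm TV}+\Sigma+\int(\abs{y}^2\wedge 1)\,\nu(\cdot,{\rm d}y)$, and a product sum does not force product summands. Concretely, take $\O$ a single point, $\nu=0$, and on $\Borel_b(\Rp)$ set
\[
\gamma\big((s,t]\big)=\int_s^t \tfrac12(1+\sin r)\,{\rm d}r,\qquad
\Sigma\big((s,t]\big)=\int_s^t \tfrac12(1-\sin r)\,{\rm d}r.
\]
Both are genuine nonnegative measures, the resulting Gaussian random measure $M'(B)\sim N\big(\gamma(B),\Sigma(B)\big)$ is infinitely divisible and independently scattered, and its control measure is $\gamma+\Sigma=\leb$. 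Yet $M'((s,t])$ has mean $\tfrac{t-s}{2}+\tfrac12(\cos s-\cos t)$, which depends on $s$ and $t$ separately, so $t\mapsto M'((0,t])$ does not have stationary increments and is not a L\'evy process. The paper's one-line proof of (b) does not address this, and the statement as written needs the stronger hypothesis that each of $\gamma$, $\Sigma$, $\nu$ (not merely $\lambda$) has product form with $\leb$ in the time variable; under that hypothesis your argument goes through immediately, and this stronger hypothesis is what is actually verified wherever the paper applies part~(b), e.g.\ in Proposition~\ref{prop:LRM}.
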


\begin{proof}
(a)	Let $\mathcal{R}$ be the ring consisting of finite unions of disjoint sets of the form $(s,t]\times A$ for some $0\le s\le t$ and $A\in\Borel_b(\O)$. We define a process $M'$ on $\mathcal{R}$ by the prescription
	\begin{align*}
		M'\left(\bigcup_{i=1}^n (s_i,t_i]\times A_i\right):=\sum_{i=1}^n\big(M(t_i,A_i)-M(s_i,A_i)\big)
		\,.
	\end{align*}
For a set $B=(s,t]\times A$ in  $\mathcal{R}$ the characteristic exponent $\Psi$ of 
the random variable $M'((s,t]\times A)= M(t,A)-M(s,A)$ can be estimated for all $u\in \R$ by
\begin{align*}
\abs{\Psi(u)}&\le (t-s)
\Big(\abs{u}\norm{\gamma}(A)+\tfrac{1}{2}u^2\Sigma(A)+\tfrac{1}{2}u^2\int_{A\times B_{\R}}y^2\,\nu({\rm d}x,{\rm d}y)+2\nu(A\times B_{\R}^c)\Big)\\
&= \abs{u} \big( \leb\otimes\norm{\gamma}\big)(B)+ 
\tfrac12 u^2 \big( \leb\otimes\Sigma\big)(B)\\
&\qquad + \tfrac12 u^2 \int_{B\times B_{\R}} y^2 \, \big(\leb\otimes \nu\big)({\rm d}v,{\rm d}x,{\rm d}y \big) + 2 \big(\leb\otimes \nu\big)(B\times B_{\R}^c).
\end{align*}
Since the last line is finitely additive for disjoint sets, the estimate above extends to 
arbitrary sets $B=(s_1,t_1]\times A_1 \cup\cdots \cup (s_n,t_n]\times A_n\in \mathcal{R}$. 
Thus, if $(B_n)_{n\in\N}\subseteq \mathcal{R}$ is a sequence with $B_n\downarrow\emptyset$
we obtain $M'(B_n) \to 0$ in probability. 
Consequently, we may apply Theorem 2.15 in \cite{Kallenberg2017}, considering positive and negative parts separately, to extend $M'$ to a
unique independently scattered random measure on $\Borel_b(\O)$. Infinite divisibility of the extension follows immediately from that of $M$.

(b) Follows immediately from the assumed form of the control measure and the independent scattering of $M'$. 
\end{proof}

\begin{remark}\label{re.Gauss-and-Levy-white-noise}
Gaussian space-time white noise is usually defined equivalently to a Gaussian random measure on $\Borel_b(\Rp\times \O)$ in the sense of Definition \ref{def:LevyTypeMeasure}. Typically, one assumes that the measure $\Sigma$ on $\Borel_b(\Rp\times \O)$ is either the Lebesgue measure or of the form $\Sigma=\leb\otimes \Sigma_0$ for a $\sigma$-finite measure $\Sigma_0$ on $\Borel_b(\O)$.  Thus, Part (b) of Proposition \ref{prop:LVRMisISRM} shows that our definition of a L\'evy-valued random measure naturally extends the class of Gaussian space-time white noises to a L\'evy-type setting. 
\end{remark}

The relation between random measures and models of L\'evy-type noise utilising a 
L\'evy-It{\^o} decomposition seems to be well known. We rigorously formulate this result in our setting:
\begin{proposition}
\label{prop:LRM}
	Let $\zeta$ be a $\sigma$-finite Borel measure on $\Borel(\O)$ and $(U,\mathcal{U},\nu)$ a  $\sigma$-finite measure space. Assume that 
	\begin{enumerate}
		\item[{\rm (a)}] $\rho\colon \Borel_b(\O)\to\R$ is a signed measure;
		\item[{\rm (b)}] $W\colon \Borel_b(\Rp\times\O)\to L^2(\Omega,P)$ is a  Gaussian random measure with characteristics $(0,\leb\otimes\zeta,0)$; 
		\item[{\rm (c)}] $N\colon \Borel_b(\Rp\times\O)\otimes\mathcal{U}\to L^0(\Omega,P)$ is Poisson random measure  with intensity $\leb\otimes\zeta\otimes\nu$, independent of $W$, and with compensated Poisson random measure $\widetilde{N}$.
	\end{enumerate}
	Then for any  functions
	\begin{enumerate}
		\item[{\rm (1)}] $b\in L^2(\O,\zeta)$, 
		\item[{\rm (2)}] $c\colon \O \times U \to \R$ with 
		 $\int_{\O\times U} \left( \abs{c(x,y)}^2 \wedge \abs{c(x,y)}\right)\,(\zeta\otimes\nu)(dx,dy)<\infty$, 
		\item[{\rm (3)}] $d:\O\times U\to \mathbb{R}$ with 
		  $\int_{\O\times U} \big(\abs{d(x,y)} \wedge 1\big) \,(\zeta\otimes\nu)(dx,dy)<\infty$,
	\end{enumerate}
	we define a mapping $M'\colon\Borel_b(\Rp\times\O)\to L^0(\Omega,P)$ by
	\begin{align*}
		M'(B) &= \big(\leb\otimes\rho\big)(B) + \int_{B} b(x)\,  W(\mathrm{d}s, \mathrm{d}x) \\
			&\qquad	 + \int_{B\times U} c(x,y)\,\widetilde{N}(\mathrm{d}s,\mathrm{d}x,\mathrm{d}y)
				+ \int_{B\times U} d(x,y)\,N(\mathrm{d}s,\mathrm{d}x,\mathrm{d}y)
	\,.\end{align*}
	Then we obtain a L\'evy-valued random measure on $\Borel_b(\O)$ by the prescription
	\[M(t,A) := M'((0,t]\times A)
	\qquad\text{for all }A\in\Borel_b(\O),\,t\ge 0.
	\]
\end{proposition}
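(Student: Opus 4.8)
The prescription $M(t,A):=M'((0,t]\times A)$ is precisely the one appearing in Proposition~\ref{prop:LVRMisISRM}(b). Consequently, the plan is to verify that the map $M'\colon\Borel_b(\Rp\times\O)\to L^0(\Omega,P)$ given in the statement is an infinitely divisible random measure in the sense of Definition~\ref{def:LevyTypeMeasure} whose control measure is of the product form $\leb\otimes\lambda_0$ for some $\sigma$-finite measure $\lambda_0$ on $\Borel(\O)$; the assertion then follows immediately from Proposition~\ref{prop:LVRMisISRM}(b).

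First I would check that $M'(B)$ is a well-defined element of $L^0(\Omega,P)$ for every $B\in\Borel_b(\Rp\times\O)$. As $B$ is relatively compact, it is contained in $(0,T]\times K$ for some $T>0$ and relatively compact $K\subseteq\O$, so integration in the time variable contributes only a finite Lebesgue factor. Hence $b\in L^2(\O,\zeta)$ yields $\mathbbm{1}_B\,b\in L^2(\Rp\times\O,\leb\otimes\zeta)$, so the Gaussian integral lies in $L^2(\Omega,P)$; writing $\mu:=\leb\otimes\zeta\otimes\nu$, the assumption $\int_{\O\times U}(\abs{c}^2\wedge\abs{c})\,\mathrm d(\zeta\otimes\nu)<\infty$ makes $\int_{B\times U}c\,\mathrm d\widetilde N$ converge, on splitting $\{\abs{c}\le 1\}$, where the $L^2$-isometry applies, from $\{\abs{c}>1\}$, where $\int\abs{c}\,\mathrm d\mu<\infty$ gives $L^1$-convergence of the compensated integral; and $\int_{\O\times U}(\abs{d}\wedge 1)\,\mathrm d(\zeta\otimes\nu)<\infty$ makes $\int_{B\times U}d\,\mathrm dN$ an almost surely absolutely convergent sum.

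I would then verify the defining properties of an independently scattered random measure. For pairwise disjoint $B_1,B_2,\dots\in\Borel_b(\Rp\times\O)$, the random measures $W$ and $N$ each assign independent values to disjoint sets and are independent of one another, while the drift $(\leb\otimes\rho)(B)$ is deterministic; since the restrictions of $N$ to the disjoint sets $B_k\times U$ are independent, the variables $M'(B_1),M'(B_2),\dots$ are independent even though the compensated and uncompensated jump integrals are driven by the same $N$. The identity $M'(\bigcup_k B_k)=\sum_k M'(B_k)$ in probability follows from the $\sigma$-additivity of the drift and of each stochastic integral. Finally, each $M'(B)$ is the sum of a deterministic constant, a Gaussian variable, and a single Poisson-type jump integral that is independent of the Gaussian part; since each summand is infinitely divisible, so is $M'(B)$.

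The step I expect to require the most care is the identification of the characteristics $(\gamma',\Sigma',\nu')$ of $M'$ and the verification that its control measure factorises. Computing the characteristic exponent of $M'(B)$ shows that the Gaussian part contributes $\Sigma'(B)=\int_B b^2\,\mathrm d(\leb\otimes\zeta)=\big(\leb\otimes(b^2\zeta)\big)(B)$, whereas the jump part is governed by the image of $\leb\otimes\zeta\otimes\nu$, restricted to $B\times U$, under $(s,x,y)\mapsto c(x,y)+d(x,y)$, since a common atom of $N$ contributes jump size $c(x,y)+d(x,y)$; in particular $\nu'$, and hence $\int_\R(\abs{y}^2\wedge 1)\,\nu'(\cdot,\mathrm dy)$, is again of the form $\leb\otimes(\cdot)$. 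The delicate bookkeeping lies in the drift $\gamma'$: rewriting the full compensation implicit in $\int c\,\mathrm d\widetilde N$ in terms of the truncated compensation of the standard L\'evy--Khintchine form produces correction terms, but each of these is once more an integral against $\leb\otimes\zeta\otimes\nu$, so that together with $\leb\otimes\rho$ one obtains $\gamma'=\leb\otimes\gamma_0'$ for a signed measure $\gamma_0'$ on $\O$. Hence $\norm{\gamma'}_{\mathrm{TV}}=\leb\otimes\norm{\gamma_0'}_{\mathrm{TV}}$, every summand of the control measure carries $\leb$ as its time-component, and a direct check shows the resulting $\lambda_0$ to be $\sigma$-finite. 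Proposition~\ref{prop:LVRMisISRM}(b) then yields the claim.
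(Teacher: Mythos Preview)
Your proposal is correct and follows essentially the same route as the paper: both verify that $M'$ is an infinitely divisible random measure on $\Borel_b(\Rp\times\O)$ whose control measure has the product form $\leb\otimes\lambda_0$, and then invoke Proposition~\ref{prop:LVRMisISRM}(b). The paper is simply more terse---it cites \cite{Walsh1986} and \cite[Lemma~12.13]{Kallenberg2002} for well-definedness of the integrals and reads off the product structure of the control measure directly from the characteristic function of $M'((0,t]\times A)$ computed via \cite[Proposition~19.5]{sato2013}, rather than assembling $(\gamma',\Sigma',\nu')$ piece by piece as you do.
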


\begin{proof}
	The existence of the Gaussian integral is guaranteed by \cite{Walsh1986} and that of the Poisson integrals  by \cite[Lemma\ 12.13]{Kallenberg2002}. We first show that $M'$ forms an infinitely divisible random measure. The $\sigma$-additivity 
	and independent scattering follow immediately from the definition of the stochastic integrals. 
	It follows from Proposition 19.5 in \cite{sato2013} that for fixed $t\in \Rp$ and $A\in\Borel_b(\O)$ the characteristic function of the random variable $M'((0,t]\times A)$ is given by 
	\begin{align*}
		&\phi_{M'(0,t]\times A)}(u) = 	
		\exp\left(t\left( iu\rho(A) - \tfrac{1}{2}u^2\int_A b^2(x) \,\zeta(\mathrm{d}x)   \right.\right. \\
		&+ \!\! \left.\left.\int_A\int_{U} \!\left(e^{iuc(x,y)}-1-iuc(x,y)\right) \nu(\mathrm{d}y)\,\zeta(\mathrm{d}x) 
		+ \int_A\int_{U} \!\left(e^{iud(x,y)}-1\right) \nu(\mathrm{d}y)\, \zeta(\mathrm{d}x) \right)\right),
	\end{align*}
	which shows infinite divisibility; furthermore the control measure is seen to be proportional to Lebesgue measure in $t$. Consequently, by Proposition \ref{prop:LVRMisISRM}, we obtain the L\'evy-valued random measure $M$. 	
\end{proof}

\begin{example}\label{ex.Balan}
	Balan \cite{Balan2014} defines $\alpha$-stable L\'evy noise for $\alpha \in (0,2)$ as a random measure $M'$ given by, for bounded sets $B$ in $\Borel_b(\mathbb{R}_+\times\mathbb{R}^d)$,
	\[M'(B):=
		\begin{cases}
\int_{B\times\mathbb{R}} y \,N(\mathrm{d}s,\mathrm{d}x,\mathrm{d}y),&\text{if }\alpha\in(0,1],\\
		\int_{B\times\mathbb{R}} y \,\widetilde{N}(\mathrm{d}s,\mathrm{d}x,\mathrm{d}y),&\text{if } \alpha\in(1,2),
\end{cases}
	\]
	where $N$ is a Poisson random measure on $\Borel_b(\mathbb{R}_+\times\mathbb{R}^d\times\mathbb{R})$ with intensity $\leb\otimes\leb\otimes\nu_{\alpha}$, and
	\[\nu_{\alpha}(\mathrm{d}y) =
		\left(p\alpha y^{-\alpha-1}\mathbbm{1}_{(0,\infty)}(y) 
		+q\alpha (-y)^{-\alpha-1}\mathbbm{1}_{(-\infty,0)}(y)\right) \mathrm{d}y
	\]
	for some $p+q=1$ (for the case $\alpha=1$ it is required that $p=q=\tfrac{1}{2}$). Proposition \ref{prop:LRM} guarantees that, by defining $M(t,A):=M'((0,t]\times A)$ for $t\geq 0$ and $A\in\Borel_b(\R^d)$, we obtain a L\'evy-valued random measure $M$ on $\Borel_b(\R^d)$. Direct calculation shows that the characteristic function of $M(t,A)$ is given by, for $t\geq 0$, $A\in\Borel_b(\R^d)$ and $u\in\R$,
	\[\varphi_{M(t,A)}(u) = \exp \Bigg(t\cdot\leb(A)\cdot \Big(
		i\beta\frac{\alpha}{1-\alpha}u + \int_{\R} \big(e^{iuy}-1-iuy\1_{B_{\R}}(y)\big)\,\nu_{\alpha}({\rm d}y)
		\Big)\Bigg)
	\,,\]
	where $\beta:=p-q$, and thus we see the characteristics of $M$ are $\big(\beta\tfrac{\alpha}{1-\alpha}\leb,0,\leb\otimes\nu_{\alpha}\big)$. The control measure is given by
	\[\lambda(A) = \Big(\abs{\beta\frac{\alpha}{1-\alpha}}+\frac{2}{2-\alpha}\Big)\leb(A)\qquad\text{for } A\in\Borel(\R^d),\]
	with the necessary restriction for the case $\alpha=1$.
\end{example}

\begin{example}
	Mytnik, in \cite{Mytnik2002}, considers a martingale-valued measure $(M(t,A):t\geq 0,A\in\Borel_b(\R^d))$ in the sense of Walsh \cite{Walsh1986},  such that for any $A\in\Borel_b(\R^d)$, the process $(M(t,A):t\geq 0)$ is a real-valued $\alpha$-stable process ($\alpha\in(1,2)$), with Laplace transform
	\[E\left[e^{-uM(t,A)}\right]
		=	e^{-tu^{\alpha}\cdot\leb(A)},
		\qquad t\geq 0,u\geq 0.
	\]
	The author terms $M$ an \emph{$\alpha$-stable measure without negative jumps}.
\end{example}

\begin{example}
	Basse-O'Connor and Rosinski in Section 4 of \cite{OConnor2016} consider an infinitely divisible random measure $M$ on $\R\times V$, for some countably-generated measure space $V$, which is invariant under translations over $\R$. By Proposition \ref{prop:LVRMisISRM}, $M$ defines a L\'evy-valued random measure on $V$.
\end{example}

Because of the multiplicative relation between the characteristics of the infinitely divisible random measures $M(1)$ and $M(t)$, remarked after Definition \ref{def:LWNRM}, the integration theory for infinitely divisible random measures developed in \cite{Rajput1989}  directly extends to L\'evy-valued random measures $(M(t):t\geq 0)$ on $\Borel_b(\O)$. For a simple function 
\begin{align}\label{eq.def-simple}
	f\colon \O \to \R,  \qquad
	f(x)= \sum_{k=1}^n \alpha_k\mathbbm{1}_{A_k}(x),
\end{align}
for $\alpha_k\in\R$ and pairwise disjoint sets $A_1,\dots, A_n\in \Borel_b(\O)$, 
the integral is defined as 
\begin{align}\label{eq.def-intM-simple}
	\int_A f(x)\,M(t, \mathrm{d}x)
		:= \sum_{k=1}^n \alpha_k M(t,A\cap A_k)
	\qquad\text{for all } A\in \Borel(\O),\, t\ge 0. 
\end{align}
An arbitrary measurable function $f\colon \O\to\mathbb{R}$ is said to be \textit{$M$-integrable} if the following hold:
\begin{enumerate}
	\item[(1)] there exists a sequence of simple functions $(f_n)_{n\in\N}$ of the form \eqref{eq.def-simple} such that $f_n$ converges pointwise to $f$ $\lambda$-a.e., where $\lambda$ is the control measure of $M$;
	\item[(2)] for each $A\in\Borel(\O)$ and $t\geq 0$, the sequence $\big(\int_A f_n(x)\,M(t,\mathrm{d}x) \big)_{n\in\N}$ converges in probability.
\end{enumerate}
In this case, we define 
\begin{align}\label{eq.def-intM}
\int_A f(x)\,M(t, \mathrm{d}x) := P\mathrm{-}\lim_{n\to\infty}  \int_A f_n(x)\,M(t,\mathrm{d}x).
\end{align}
It is clear, by the stationarity of the increments of L\'evy processes, that Condition (2) above holds for all $t\geq 0$ if it holds for any $t>0$. Furthermore, Theorem 3.3 in \cite{Rajput1989} identifies the set of $M$-integrable functions as the Musielak-Orlicz space 
\[L_{M}(\O,\lambda) := \left\{f\in L^0(\O,\lambda):\int_{\O} \Phi_M(\abs{f(x)},x)\,\lambda(\mathrm{d}x)<\infty \right\},
\] 
where $\Phi_M:\mathbb{R}\times \O\to\mathbb{R}$ is defined as:
\begin{align}\label{eq.def-Phi-M}
\Phi_M(u,x)
	:= \sup_{\abs{c}\leq 1} \abs{U(cu,x)}& + u^2g(x) + \int_{\mathbb{R}} \left(1\wedge\abs{uy}^2\right)\,\rho(x,\mathrm{d}y),\\
\text{with }	U(u,x)&:=ua(x)+u\int_{\mathbb{R}} y\big(\1_{B_{\R}}(uy)-\1_{B_{\R}}(y)\big)\,\rho(x,\mathrm{d}y),\notag \\
	 a(x)&:=\frac{\mathrm{d}\gamma}{\mathrm{d}\lambda}(x),\qquad
	g(x)=\frac{\mathrm{d}\Sigma}{\mathrm{d}\lambda}(x).\notag
\end{align}
Here, $(\gamma,\Sigma,\nu)$ denotes the characteristics of $M$. 
The  measure $\rho(x,\cdot)$ is a disintegration of $\nu$ over $\lambda$, i.e.\  $\int_{\O\times\mathbb{R}} h(x,y)\,\nu(\mathrm{d}x,\mathrm{d}y) = \int_{\O}\Big(\int_{\mathbb{R}} h(x,y)\,\rho(x,\mathrm{d}y)\Big)\,\lambda(\mathrm{d}x)$ for each measurable function $h\colon \O\times\R\to \Rp$. 
The space $L_M(\O,\lambda)$ 
is a complete, translation-invariant,  linear metric space.
Furthermore for all $t\geq 0$, the mapping 
\begin{align}\label{eq:JLMtoL0}
	J(t):L_M(\O,\lambda)\to L^0(\Omega,P),
	\qquad
	J(t)f=\int_{\O} f(x)\,M(t,{\rm d}x),
\end{align}
is continuous.
Finally, Proposition 2.6 in \cite{Rajput1989} allows us to immediately state the L\'evy symbol of $J(\cdot)f$ as, for $u\in\R$,
\begin{align}
\label{eq:MIntChars}
	\Psi_{J(\cdot)f}(u) = 
		& iu\int_{\O} f(x)\,\gamma({\rm d}x) - \tfrac{1}{2}u^2\int_{\O} f^2(x)\,\Sigma({\rm d}x)\notag \\
		& +\int_{\O\times\R}\Big(e^{iuf(x)y}-1-iuf(x)y\1_{B_{\R}}(y)\Big)\,\nu({\rm d}x,{\rm d}y)
	\,.
\end{align}

For an open set $\O\subseteq \R^d$ let $\mathcal{D}(\O)$ denote the space of infinitely differentiable functions with compact support. We equip $\D(\O)$ with the inductive topology, that is, the strict inductive limit of the Fr\'echet spaces $\mathcal{D}(K_i):=\{f\in C^{\infty}(\R^d):\text{supp}(f)\subseteq K_i\}$ where $\{K_i\}_{i\in\N}$ is a strictly increasing sequence of compact subsets of $\O$ such that $\O=\bigcup_{i\in\N}K_i$. The topological dual space $\D^\ast(\O)$ is called the space of distributions, which we equip with the strong topology, that is the topology generated by the family of seminorms $\{\eta_B\}$, where for each bounded $B\subseteq \D(\O)$ we define $\eta_B(f):=\sup_{\phi\in B}\abs{\scapro{\phi}{f}}$ for $f\in\D^{\ast}(\O)$. In these topologies $\D(\O)$ is reflexive \cite[Page~376]{Treves1967}.

Analogously as locally integrable functions and measures are identified with distributions, we proceed to relate a L\'evy-valued random measure $M$ on $\Borel_b(\O)$ to a distribution-valued process. For this purpose, we define for each $t\ge 0$ the integral mapping 
\begin{align}\label{eq.def-J}
	J_{\D}(t):\D(\O)\to L^0(\Omega,P)\,,
	\qquad
	J_{\D}(t)f=\int_{\O} f(x)\,M(t,{\rm d}x).
\end{align}
In the proof of Theorem \ref{th.J-in-D} below we show that $\D(\O)$ is continuously embedded in $L_{M}(\O, \lambda)$, and thus the mapping $J_{\D}(t)$ is well-defined. 

\begin{theorem}\label{th.J-in-D}
	For a L\'evy-valued random measure $M$ on $\Borel_b(\O)$ let $J_{\D}$ be defined by \eqref{eq.def-J}. Then there exists a genuine L\'evy process $(Y(t):\, t\ge 0)$ in $\D^\ast(\O)$ satisfying
	\begin{align*}
		\scapro{f}{Y(t)}=J_{\D}(t)f \qquad\text{for all }f\in \D(\O), \, t\ge 0. 
	\end{align*}
\end{theorem}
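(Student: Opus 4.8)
The plan is to realise the cylindrical Lévy process $(J_\D(t))_{t\ge 0}$ as a genuine process in $\D^\ast(\O)$ by combining the nuclearity of $\D(\O)$ with the regularisation theorem of It\^o and Nawata \cite{Ito-Nawata}. The three ingredients I would assemble are: continuity of the maps $J_\D(t)$ on the nuclear space $\D(\O)$, continuity and positive-definiteness of their characteristic functionals, and the fact that all finite-dimensional projections of $(J_\D(t))_{t\ge0}$ are genuine $\R^n$-valued Lévy processes.

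First I would establish the continuous embedding $\D(\O)\hookrightarrow L_M(\O,\lambda)$ claimed before the theorem, which simultaneously shows that $J_\D(t)$ is well defined. Fixing a compact $K\subseteq\O$ and $f\in\D(K)$, I would estimate the Musielak--Orlicz modular $\int_\O \Phi_M(\abs{f(x)},x)\,\lambda(\mathrm{d}x)$ appearing in \eqref{eq.def-Phi-M}. The point is that the Radon--Nikodym densities $a$, $g$ and $x\mapsto\int_\R(\abs{y}^2\wedge 1)\,\rho(x,\mathrm{d}y)$ entering $\Phi_M$ are all bounded by $1$ for $\lambda$-a.e.\ $x$ by the very definition of the control measure $\lambda$, so that $\Phi_M(\abs{f(x)},x)$ is dominated by a constant depending only on $\norm{f}_\infty$; since $\lambda(K)<\infty$, the modular is finite and tends to $0$ as $\norm{f}_\infty\to 0$. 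Hence each inclusion $\D(K_i)\to L_M(\O,\lambda)$ is continuous, and by the strict inductive-limit structure of $\D(\O)$ so is $\D(\O)\to L_M(\O,\lambda)$. Composing with the continuity of $J(t)\colon L_M(\O,\lambda)\to L^0(\Omega,P)$ from \eqref{eq:JLMtoL0} shows that each $J_\D(t)$ is continuous on the nuclear space $\D(\O)$.

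Next I would record the two structural facts needed for regularisation. For the characteristic functional I would read off from \eqref{eq:MIntChars} that $f\mapsto E[\exp(iJ_\D(t)f)]=\exp(t\Psi_{J_\D}(f))$, where $\Psi_{J_\D}$ is the $u=1$ specialisation of the symbol, and note that continuity of $J_\D(t)$ (convergence in probability implying convergence of characteristic functions) makes this functional continuous, positive-definite and equal to $1$ at $f=0$. For the increment structure I would verify that for any finite family $f_1,\dots,f_n\in\D(\O)$ the $\R^n$-valued process $(J_\D(t)f_1,\dots,J_\D(t)f_n)$ is a Lévy process: for simple functions this follows from Definition \ref{def:LWNRM}, since the vector is a fixed linear image of the jointly Lévy process of the $M(t,A\cap A_k)$, and the general case follows by approximating in $L_M(\O,\lambda)$ and passing to the limit in probability. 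This yields Kolmogorov-consistent finite-dimensional distributions with independent and stationary increments and stochastic continuity.

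Finally I would invoke the It\^o--Nawata regularisation: because $\D(\O)$ is nuclear and the cylindrical Lévy process $(J_\D(t))_{t\ge 0}$ has a continuous characteristic functional and consistent finite-dimensional laws as above, the nuclear-space Bochner--Minlos theorem concentrates the marginals on $\D^\ast(\O)$ and \cite{Ito-Nawata} assembles them into a genuine $\D^\ast(\O)$-valued process $(Y(t):t\ge 0)$ with $\scapro{f}{Y(t)}=J_\D(t)f$; the identity holds simultaneously for all $f\in\D(\O)$ and $t\ge 0$ because both sides are continuous in $f$ and agree on a countable dense set, and the Lévy property transfers from the finite-dimensional projections. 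I expect the main obstacle to be precisely this last step: ensuring that the regularised object concentrates on $\D^\ast(\O)$ rather than the merely algebraic dual, and that it admits a càdlàg modification in the strong topology of $\D^\ast(\O)$. The nuclearity established in the first paragraph, which makes the Minlos concentration and the hypotheses of \cite{Ito-Nawata} verifiable, is exactly what resolves it, while the path regularity is inherited from the càdlàg finite-dimensional Lévy projections.
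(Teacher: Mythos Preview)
Your argument is broadly correct and follows the same architecture as the paper's proof: establish a continuous embedding $\D(\O)\hookrightarrow L_M(\O,\lambda)$ by working on each $\D(K)$ and passing to the inductive limit, verify that the finite-dimensional projections of $J_\D$ form genuine $\R^n$-valued L\'evy processes, and then invoke a nuclear-space regularisation theorem. There are, however, two points where your route diverges from the paper's and one of them carries a real gap.

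For the embedding, the paper factors through $L^2$: it proves a separate lemma (Lemma~\ref{lem:L2Embedding}) that $L^2(K,\lambda)\hookrightarrow L_M(K,\lambda)$ whenever $\lambda(K)<\infty$, and then uses $\D(K)\hookrightarrow L^\infty(K,\lambda)\hookrightarrow L^2(K,\lambda)$. Your direct approach, bounding $\Phi_M(u,x)$ by a polynomial in $u$ uniformly in $x$ via the observation that the densities $a$, $g$ and $\int_\R(\abs{y}^2\wedge1)\,\rho(x,\mathrm{d}y)$ are $\lambda$-a.e.\ bounded by $1$, is valid and arguably more transparent for this particular application; the paper's $L^2$ lemma is a slightly stronger standalone result.

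The genuine gap is in the regularisation step. The paper does not use It\^o--Nawata here; it invokes Theorem~3.8 of Fonseca-Mora \cite{Fonseca2017}, which takes as input a cylindrical L\'evy process with continuous $L(t)$ on a space that is both nuclear \emph{and ultrabornological}, and outputs directly a genuine $\D^\ast(\O)$-valued L\'evy process. The It\^o--Nawata result \cite{Ito-Nawata} regularises individual cylindrical random variables; applying it time-by-time gives you $Y(t)\in\D^\ast(\O)$ for each $t$, but assembling these into a single process with independent stationary increments, stochastic continuity, and a c\`adl\`ag modification in the strong dual topology is precisely the nontrivial content you defer with the phrase ``path regularity is inherited from the c\`adl\`ag finite-dimensional L\'evy projections''. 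That inheritance is not automatic: it is essentially what \cite{Fonseca2017} proves, and it uses the ultrabornological hypothesis (which you do not mention) in addition to nuclearity. So either cite the Fonseca-Mora theorem directly, or be prepared to reproduce its argument for lifting c\`adl\`ag finite-dimensional paths to a c\`adl\`ag $\D^\ast(\O)$-valued version.
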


Our proof of this Theorem relies on the following two Lemmas. 
\begin{lemma}
\label{lem:JnDimLevy}
	For a L\'evy-valued random measure $M$ on $\Borel_b(\O)$ let $J$ be defined by \eqref{eq:JLMtoL0}. Then, for any $f_1,\ldots ,f_n \in L_M(\O,\lambda)$ and $n\in\N$, we have	that
\[\big((J(t)f_1,\ldots,J(t)f_n):\, t\geq 0\big)\]
	is a L\'evy process in $\R^n$.
\end{lemma}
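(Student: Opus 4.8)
The plan is to verify the three defining properties of a L\'evy process in $\R^n$---stationary independent increments, stochastic continuity, and starting at the origin---for the vector-valued process $\big((J(t)f_1,\ldots,J(t)f_n):\, t\geq 0\big)$. The natural strategy is to first establish the claim for simple functions $f_1,\dots,f_n$, where everything reduces to the already-assumed L\'evy property of the finite-dimensional vectors $\big((M(t,A_1),\dots,M(t,A_m)):t\ge0\big)$ from Definition \ref{def:LWNRM}, and then to pass to general $M$-integrable functions by the approximation and continuity results recorded above.

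\textbf{Step 1 (simple functions).} First I would take $f_1,\dots,f_n$ to be simple functions of the form \eqref{eq.def-simple}. Writing each $f_j=\sum_{k} \alpha_{jk}\mathbbm 1_{A_k}$ over a \emph{common} refinement into pairwise disjoint sets $A_1,\dots,A_m\in\Borel_b(\O)$, the definition \eqref{eq.def-intM-simple} gives $J(t)f_j=\sum_{k=1}^m \alpha_{jk}\,M(t,A_k)$. Thus the $\R^n$-valued process $(J(t)f_1,\dots,J(t)f_n)$ is the image of the $\R^m$-valued L\'evy process $\big((M(t,A_1),\dots,M(t,A_m)):t\ge0\big)$ under the fixed linear map $\R^m\to\R^n$ with matrix $(\alpha_{jk})$. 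Since a continuous linear image of a L\'evy process is again a L\'evy process (stationary independent increments, stochastic continuity, and $J(0)f_j=0$ are all preserved under linear maps), the claim holds for simple functions.

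\textbf{Step 2 (approximation).} Next I would fix general $f_1,\dots,f_n\in L_M(\O,\lambda)$ and choose, for each $j$, a sequence of simple functions $f_j^{(p)}$ approximating $f_j$ in the metric of the Musielak--Orlicz space $L_M(\O,\lambda)$; by the continuity of $J(t)$ stated in \eqref{eq:JLMtoL0}, we obtain $J(t)f_j^{(p)}\to J(t)f_j$ in probability for each fixed $t\ge0$. One then checks that each defining property passes to the limit in probability: the starting value $J(0)f_j=0$ is immediate; stationarity and independence of increments survive because convergence in probability preserves joint distributions and independence of random vectors (using that a weak limit of product measures is a product measure); and stochastic continuity of the limit process follows from the joint convergence together with stochastic continuity of the approximants. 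A clean way to organise Step 2 is to work at the level of characteristic functions: the explicit L\'evy symbol \eqref{eq:MIntChars} shows directly that $(J(t)f_1,\dots,J(t)f_n)$ has characteristic function $\exp(t\,\Psi(\cdot))$ for the appropriate $\Psi$ on $\R^n$ (obtained by applying \eqref{eq:MIntChars} to the linear combination $\sum_j u_j f_j$), which is exactly the hallmark of a L\'evy process; the continuity properties of $\Psi$ in $u$ then yield stochastic continuity.

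The main obstacle I anticipate is Step 2, specifically ensuring that the \emph{joint} law (not merely each marginal) converges and that independence of increments is genuinely preserved in the limit. Convergence in probability of each coordinate does give joint convergence in distribution, so the cleanest route is to bypass delicate limiting arguments by computing the joint characteristic function of increments directly via \eqref{eq:MIntChars} applied to arbitrary linear combinations $\sum_j u_j f_j\in L_M(\O,\lambda)$; this simultaneously exhibits the product form over disjoint time intervals (independent increments), the dependence on $t$ only through the factor $t$ (stationary increments), and continuity in $t$ (stochastic continuity). I would therefore lean on the characteristic-function computation as the backbone of the proof, reserving the approximation argument only to justify that \eqref{eq:MIntChars} indeed holds for the relevant linear combinations of the $f_j$.
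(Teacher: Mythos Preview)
Your Step~1 and the approximation route (a) in Step~2 are correct and are essentially what the paper does: reduce to a common refinement so that the vector process becomes a linear image of the $\R^m$-valued L\'evy process $(M(\cdot,\tilde A_1),\dots,M(\cdot,\tilde A_m))$, and then pass independence and stationarity of increments to the limit via convergence in probability at each fixed time (which indeed gives convergence of all finite-dimensional joint laws, so these distributional properties survive). The paper handles stochastic continuity by first showing each one-dimensional process $J(\cdot)f_k$ is a L\'evy process and then using a union bound; your treatment of this point is a bit vague, but can be completed the same way.

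There is, however, a genuine gap in your preferred route (b). The formula \eqref{eq:MIntChars} gives only the characteristic function of $J(t)g$ at a \emph{single} time $t$; knowing that $E[e^{iJ(t)g}]=e^{t\Psi_g(1)}$ for every $g=\sum_j u_j f_j$ and every $t$ tells you nothing about the joint law over several times, and in particular does not ``exhibit the product form over disjoint time intervals''. Equivalently, even if every one-dimensional projection $\big(\langle u, (J(t)f_1,\dots,J(t)f_n)\rangle:t\ge0\big)$ were a real L\'evy process, this would only say that for each fixed $u$ the scalar increments $\langle u,\Delta_k\rangle$ are independent across $k$, which does \emph{not} imply that the vector increments $\Delta_k\in\R^n$ are jointly independent. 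So \eqref{eq:MIntChars} cannot serve as the backbone for independent increments; you must go through the approximation argument (a), exactly as the paper does, and then \eqref{eq:MIntChars} is useful only afterwards (e.g.\ to read off stochastic continuity from $e^{t\Psi}\to1$ as $t\to0$).
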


\begin{proof}
	Let $f_k$ for $k=1,\dots, n$ be simple functions of the form
		\begin{align}\label{eq.simple-functions}
			f_k\colon \O \to\R, \qquad
			f_k(x)= \sum_{j=1}^{m_k} \alpha_{k,j}\mathbbm{1}_{A_{k,j}}(x),
		\end{align}
	for $\alpha_{k,j}\in\R$ and $A_{k,j}\in \Borel_b(\O)$
	with $A_{k,1},\dots,A_{k,m_k}$ disjoint for each $k\in\{1,\dots, n\}$. By taking the intersections of all possible permutations of the sets $A_{k,j}$, we can assume that 
	\begin{align*}
	f_k(x)=\sum_{j=1}^{m} \tilde{\alpha}_{k,j}\1_{\tilde{A}_{j}}(x)
	\quad\text{for all } x\in \O,
	\end{align*}
	for all $k=1,\dots, n$, where $\tilde{\alpha}_{k,j}\in\R$ and disjoint sets $\tilde{A}_1,\dots, \tilde{A}_m\in \Borel_b(\O)$ for some $m\in\N$. For each  $0\leq t_1<\cdots <t_n$ we obtain by the definition in 
		\eqref{eq.def-intM-simple} that
		\begin{align*}
			 J(t_1)f_1 &= \sum_{j=1}^{m} \tilde{\alpha}_{1,j} M\big(t_1,\tilde{A}_{j}\big), \\
			(J(t_2)-J(t_1))f_2 &= \sum_{j=1}^{m} \tilde{\alpha}_{2,j} \big(M(t_2,\tilde{A}_{j})-M(t_1,\tilde{A}_{j})\big), \\
					 \vdots \quad & = \quad\vdots \\
				 (J(t_n)-J(t_{n-1}))f_n& = \sum_{j=1}^{m} \tilde{\alpha}_{n,j} \big(M(t_n,\tilde{A}_{j})-M(t_{n-1},\tilde{A}_{j})\big).
		\end{align*}
	Independent increments of the L\'evy process $\big(M(\cdot,\tilde{A}_1),\ldots,M(\cdot,\tilde{A}_m)\big)$ together with independence of $M(t,\tilde{A}_i)$ and $M(t,\tilde{A}_j)$ for all $i,j=1,\dots, m$ with $i\neq j$
	implies that the random variables
		\begin{align*}
			J(t_1)f_1,\big(J(t_2)-J(t_1)\big)f_2,\dots, \big(J(t_n)-J(t_{n-1})\big)f_n,
		\end{align*}
		are independent. This property extends to arbitrary functions $f_1,\dots, f_n\in L_M(\O,\lambda)$ by the definition of the integrals in \eqref{eq.def-intM} as a limit of the integral for simple functions. It follows that the $n$-dimensional stochastic process $ \big( (J(t)f_1,\dots, J(t)f_n): \, t\ge 0\big)$ has independent increments.

Furthermore, if $f$ is a simple function of the form \eqref{eq.def-simple} then  
\begin{align}\label{eq.repeat-def-intM}
	 J(t)f=\sum_{k=1}^n \alpha_k M(t,A_k)
\end{align}
is a L\'evy process as it is the sum of independent L\'evy processes $M(\cdot,A_k)$. Approximating an arbitrary function $f\in L_{M}(\O,\lambda)$ by a sequence of simple functions and passing to the limit in \eqref{eq.repeat-def-intM} shows that $J(\cdot)f$ is a L\'evy process.
	 
	Let $f_1,\dots, f_n$ be arbitrary functions in $L_M(\O,\lambda)$. As $J(\cdot)f$ has stationary increments it follows that $ \big( (J(t)f_1,\dots, J(t)f_n): \, t\ge 0\big)$ has stationary increments by linearity. Furthermore, for each $c>0$ we have
	\begin{align*}
	P\big(\abs{((J(t)f_1,\dots, J(t)f_n))}>c\big)
	&= P\big(\abs{J(t)f_1}^2+\dots + \abs{J(t)f_n}^2>c^2\big)\\
	&\le \sum_{k=1}^n P\left( \abs{J(t)f_k}^2\ge \tfrac{c^2}{n}\right),
	\end{align*}
	and thus the stochastic continuity of $J(\cdot)f$ implies that of $ \big( (J(t)f_1,\dots, J(t)f_n): \, t\ge 0\big)$. Consequently, the latter is verified as an $n$-dimensional L\'evy process.
\end{proof}

\begin{lemma}
\label{lem:L2Embedding}
Let $M$ be a L\'evy-valued random measure on $\Borel_b(\O)$ with finite control measure $\lambda$. Then $L^2(\O,\lambda)$ is continuously embedded into $L_M(\O,\lambda)$.
\end{lemma}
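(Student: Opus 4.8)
The plan is to reduce the claim to a pointwise estimate on the Musielak--Orlicz integrand $\Phi_M(u,x)$ and then exploit that $\lambda$ is finite. First I would record that, by the very definition of the control measure, each of its three constituents is dominated by $\lambda$: writing $a(x)=\tfrac{\mathrm{d}\gamma}{\mathrm{d}\lambda}(x)$, $g(x)=\tfrac{\mathrm{d}\Sigma}{\mathrm{d}\lambda}(x)$ and $k(x):=\int_{\R}(1\wedge y^2)\,\rho(x,\mathrm{d}y)$, the inequalities $\norm{\gamma}_{\mathrm{TV}}\le\lambda$, $\Sigma\le\lambda$ and $\int_{\R}(1\wedge y^2)\,\nu(\cdot,\mathrm{d}y)\le\lambda$ yield $\abs{a(x)}\le 1$, $0\le g(x)\le 1$ and $0\le k(x)\le 1$ for $\lambda$-a.e.\ $x$. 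In particular $\rho(x,\{\abs{y}>1\})\le k(x)\le 1$.

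The key step is the pointwise bound. The goal is a constant $C$ such that, for $\lambda$-a.e.\ $x$ and all $u\in\R$,
\begin{align*}
  \Phi_M(u,x)\le C\,(u^2+\abs{u})+2\,R^\ast(u,x),
  \qquad R^\ast(u,x):=\int_{\{\abs{y}>1\}}\big((\abs{u}\abs{y})\wedge 1\big)\,\rho(x,\mathrm{d}y).
\end{align*}
The Gaussian term is immediate, $u^2g(x)\le u^2$, and the small-jump part of the integral term is controlled by $\int_{\{\abs{y}\le1\}}(1\wedge (u^2y^2))\,\rho\le u^2k(x)\le u^2$. For the drift functional $U$ I would split the indicator difference $\1_{B_{\R}}(cuy)-\1_{B_{\R}}(y)$ according to whether $\abs{cu}\le 1$ or $\abs{cu}>1$: in the first case the difference lives on $\{1<\abs{y}\le 1/\abs{cu}\}$, where $\abs{cu}\abs{y}\le 1$, so the contribution is bounded by $R^\ast(u,x)$; in the second case it lives on $\{1/\abs{cu}<\abs{y}\le 1\}$, where $\abs{y}\le\abs{cu}y^2$, so the contribution is bounded by $(cu)^2k(x)\le u^2$. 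Together with $\abs{ua(x)}\le\abs{u}$ this bounds the supremum term, and the large-jump remainder $\int_{\{\abs{y}>1\}}(1\wedge(u^2y^2))\,\rho\le R^\ast(u,x)$ finishes the integral term. Crucially, $R^\ast(\cdot,x)$ is increasing in $\abs{u}$, bounded by $k(x)\le 1$, and $R^\ast(u,x)\to 0$ as $u\to 0$ by dominated convergence (the integrand is dominated by $\1_{\{\abs{y}>1\}}$, which is $\rho(x,\cdot)$-integrable).

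To finish, integrating the pointwise bound and using $\lambda(\O)<\infty$ together with Cauchy--Schwarz, $\norm{f}_{L^1(\lambda)}\le\lambda(\O)^{1/2}\norm{f}_{L^2(\lambda)}$, gives for $f\in L^2(\O,\lambda)$
\begin{align*}
  \int_{\O}\Phi_M(\abs{f(x)},x)\,\lambda(\mathrm{d}x)
  \le C\big(\norm{f}_{L^2(\lambda)}^2+\lambda(\O)^{1/2}\norm{f}_{L^2(\lambda)}\big)+2\lambda(\O)<\infty,
\end{align*}
so $f\in L_M(\O,\lambda)$ and hence $L^2(\O,\lambda)\subseteq L_M(\O,\lambda)$. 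Since the inclusion is linear it suffices to verify continuity at $0$, that is $\int_{\O}\Phi_M(c\abs{f_n(x)},x)\,\lambda(\mathrm{d}x)\to 0$ for every $c>0$ whenever $\norm{f_n}_{L^2(\lambda)}\to 0$, which is precisely convergence $f_n\to 0$ in the Musielak--Orlicz space. The polynomial part is handled by the displayed estimate applied to $cf_n$, while for $\int_{\O}R^\ast(c\abs{f_n(x)},x)\,\lambda(\mathrm{d}x)$ I would pass to an a.e.-convergent subsequence and apply dominated convergence with dominating function $k\le 1\in L^1(\lambda)$ (here finiteness of $\lambda$ is essential), the standard subsequence argument then promoting this to the full sequence. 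The main obstacle is exactly this large-jump remainder $R^\ast$: it admits no clean polynomial-in-$u$ bound (for $\alpha$-stable noise one has $\Phi_M(u)\sim\abs{u}^\alpha$ with $\alpha<1$ near $0$), so continuity cannot be obtained from a uniform modulus of continuity; the resolution is that $R^\ast$ is uniformly bounded and vanishes as $u\to 0$, which together with $\lambda(\O)<\infty$ lets dominated convergence replace a quantitative rate.
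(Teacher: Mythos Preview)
Your proof is correct and follows the same overall strategy as the paper's: bound $\Phi_M(u,x)$ by a polynomial in $u$ plus a uniformly bounded large-jump remainder, then use $\lambda(\O)<\infty$ and dominated convergence to control the remainder. The execution differs in two places worth noting. First, you handle $\sup_{\abs{c}\le 1}\abs{U(cu,x)}$ directly via the case split $\abs{cu}\lessgtr 1$, while the paper invokes Lemma~2.8 of Rajput--Rosinski to reduce the supremum to $\abs{U(\abs{u},x)}$ plus a multiple of the jump integral; your argument is more self-contained. Second, for continuity of the embedding you pass to an a.e.-convergent subsequence and apply ordinary dominated convergence with majorant $k\le 1\in L^1(\lambda)$, whereas the paper first shows $(x,y)\mapsto f_n(x)y$ converges to $0$ in $\nu\vert_{\O\times B_\R^c}$-measure and then uses dominated convergence for convergence in measure; both work, but your subsequence route is shorter. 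The paper phrases its estimates at the level of the measures $\gamma,\Sigma,\nu$ whereas you work pointwise with the densities $a,g,\rho(x,\cdot)$; this is purely cosmetic.
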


\begin{proof}
Denote the characteristics of $M$ by $(\gamma,\Sigma,\nu)$. 
Note, that for arbitrary $g\in L^p(\O,\lambda)$ and $p\in [1,2]$, we have
\begin{align}\label{eq.int-f-lambda}
\begin{split}
		\int_{\O} g(x)\,\lambda({\rm d}x)
		&=\int_{\O} g(x)\,\norm{\gamma}_{TV}({\rm d}x)
		+ \int_{\O} g(x)\,\Sigma({\rm d}x)\\
		&\qquad+ \int_{\O\times B_{\R}} g(x)\abs{y}^2\,\nu({\rm d}x,{\rm d}y)
		+ \int_{\O\times B_{\R}^c} g(x)\,\nu({\rm d}x,{\rm d}y).
\end{split} 
\end{align}
For each $f\in L^2(\O,\lambda)$, the definition of $\Phi_M$ in \eqref{eq.def-Phi-M} gives
\begin{align}		\label{eq:Phi0pt1}
&\int_{\O} \Phi_M(\abs{f(x)},x)\,\lambda(\mathrm{d}x)\\
& = \int_{\O} \sup_{\abs{c}\leq 1} \abs{U(c\abs{f(x)},x)}\,\lambda({\rm d}x) 
	+ \int_{\O} \abs{f(x)}^2\,\Sigma({\rm d}x) 
 + \int_{\O\times\R} \big(1\wedge\abs{f(x)y}^2\big)\,\nu({\rm d}x,{\rm d}y).
  \notag
\end{align}
It follows from \eqref{eq.int-f-lambda} that the second integral in  \eqref{eq:Phi0pt1} is bounded by $\norm{f}^2_{L^2(\O,\lambda)}$.
Regarding the last integral in  \eqref{eq:Phi0pt1}, we obtain from \eqref{eq.int-f-lambda} that
\begin{align}\label{eq.int-max-f}
&\int_{\O\times\R}\hspace*{-.3cm}  \big(1\wedge \abs{f(x)y}^2  \big) \,\nu({\rm d}x,{\rm d}y)
 \leq \norm{f}^2_{L^2(\O,\lambda)}+ \int_{\O\times B_{\R}^c}\hspace*{-.2cm}\big(1\wedge \abs{f(x)y}^2  \big) \,\nu({\rm d}x,{\rm d}y).
	\end{align}
Lemma 2.8 in  \cite{Rajput1989} yields for the first integral in \eqref{eq:Phi0pt1} the estimate 
\begin{align}\label{eq.int-sup-U}
 &\int_{\O} \sup_{\abs{c}\leq 1} \big|U(c\abs{f(x)},x)\big|\,\lambda({\rm d}x)	\notag \\
&\qquad\qquad \leq \int_{\O}  \big|U(\abs{f(x)},x)\big| \,\lambda({\rm d}x) + 
 8\int_{\O\times\R} \big(1\wedge\abs{f(x)y}^2\big)\,\nu({\rm d}x,{\rm d}y). 
\end{align}
In \eqref{eq.int-max-f} we have already obtained an upper bound for the second integral in \eqref{eq.int-sup-U}. For estimating the first integral in \eqref{eq.int-sup-U} we obtain from the definition of $U$ and \eqref{eq.int-f-lambda} that
\begin{align*}
&\int_{\O}\abs{U(\abs{f(x)},x)}\,\lambda({\rm d}x) \\
&\qquad\quad = \int_{\O} \abs{
		\abs{f(x)}\Big(a(x)+\int_{\R}y\big(\1_{B_{\R}}(\abs{f(x)}y)-\1_{B_{\R}}(y)\big)\,\rho(x,{\rm d}y)\Big)} \,\lambda({\rm d}x) \\
&\qquad\quad \leq \int_{\O}\abs{f(x)}\,\norm{\gamma}_{TV}({\rm d}x)
		+ \int_{\O\times\R} \abs{f(x)y}\big(\1_{B_{\R}}(\abs{f(x)}y)-\1_{B_{\R}}(y)\big)\,\nu({\rm d}x,{\rm d}y)\\
& \qquad\quad\leq \norm{f}_{L^1(\O,\lambda)}+\norm{f}^2_{L^2(\O,\lambda)} +   \int_{\O\times B_{\R}^c}\abs{f(x)y} \1_{B_{\R}}(\abs{f(x)}y)
\,\nu({\rm d}x,{\rm d}y).
\end{align*}
Together with \eqref{eq.int-max-f} and \eqref{eq.int-sup-U},  we obtain
from  \eqref{eq:Phi0pt1} that
\begin{align}\label{eq.summary-Phi}
&\int_{\O} \Phi_M(\abs{f(x)},x)\,\lambda(\mathrm{d}x)
\le \norm{f}_{L^1(\O,\lambda)}+11 \norm{f}^2_{L^2(\O,\lambda)}\\
& + 9\int_{\O\times\R} \big(1\wedge\abs{f(x)y}^2\big)\,\nu({\rm d}x,{\rm d}y)
+\int_{\O\times B_{\R}^c}\abs{f(x)y} \1_{B_{\R}}(\abs{f(x)}y)
\,\nu({\rm d}x,{\rm d}y).  \notag
\end{align}
Since \eqref{eq.int-f-lambda} yields
\begin{align*}
\int_{\O\times B_{\R}^c}\hspace*{-.2cm}\big(1\wedge \abs{f(x)y}^2  \big) \,\nu({\rm d}x,{\rm d}y)
 \leq \int_{\O\times B_{\R}^c}  \,\nu({\rm d}x,{\rm d}y)\leq \lambda(\O),  \\
 \int_{\O\times B_{\R}^c}\abs{f(x)y} \1_{B_{\R}}(\abs{f(x)}y)\,\nu({\rm d}x,{\rm d} y)
  \leq \int_{\O\times B_{\R}^c}  \,\nu({\rm d}x,{\rm d}y)\leq \lambda(\O),
\end{align*}
we obtain  $f\in L_M(\O,\lambda)$ from \eqref{eq.summary-Phi}. 

To show that the embedding is continuous, let $(f_n)$ converge to $0$ in $L^2(\O,\lambda)$. We firstly show that the functions $(x,y)\mapsto f_n(x)y$ converge to $0$ in $\nu_1$-measure where $\nu_1:=\restr{\nu}{\O\times B_{\R}^c}$. For given $\epsilon>0$ define $M_n:=\{(x,y)\in \O\times B_{\R}^c : \abs{f_n(x)y}\geq\varepsilon\}$. 
As $\nu_1$ is a finite measure, there exists a compact set $K\subseteq \O\times B_{\R}^c$ such that $\nu_1(\O\times B_{\R}^c\setminus K)<\tfrac{\varepsilon}{2}$. Let $C:=\sup\{\abs{y}:(x,y)\in K\}$. Since $(f_n)$ converges in $L^1(\O,\lambda)$ it follows from \eqref{eq.int-f-lambda} that $(f_n)$ converges to $0$ in 
$L^1(\O\times B_{\R}^c,\nu)$, and thus in $\nu_1$-measure. Consequently,  there exists $N\in\N$ such that, for $n\geq N$,
	\[\nu_1\big(\{(x,y)\in \O\times B_{\R}^c : \abs{f_n(x)}\geq\tfrac{\varepsilon}{C}\}\big) \le \tfrac{\varepsilon}{2}
	\,.\]
Since $M_n\cap K\subseteq \big\{(x,y)\in \O\times B_{\R}^c:\abs{f_n(x)}\geq\frac{\varepsilon}{C}\big\}$, we obtain
\begin{align*}
\nu_1(M_n)= \nu_1(M_n\cap K)+ \nu_1(M_n\setminus K)\le \tfrac{\epsilon}{2}+
 \tfrac{\epsilon}{2}
 \qquad\text{for all }n\ge N, 
\end{align*}
which shows the claim. 

Since $\nu_1$ is a finite measure,  Lebesgue's theorem for dominated convergence in $\nu_1$-measure implies 
	\begin{align*}
		\lim_{n\to\infty}\int_{\O\times B_{\mathbb{\R}}^c} \abs{f_n(x)y }\1_{B_{\R}}(\abs{f_n(x)}y) \,\nu(\mathrm{d}x,\mathrm{d}y)=0. 
	\end{align*}
Similar arguments show that
	\begin{align*}
		\lim_{n\to\infty}\int_{\O\times B_{\mathbb{\R}}^c} \big(1\wedge\abs{f_n(x)y}^2\big) \,\nu(\mathrm{d}x,\mathrm{d}y)=0. 
	\end{align*}
Consequently, it follows from \eqref{eq.summary-Phi} that $(f_n)$ converges in $L_M(\Phi,\lambda)$, which completes the proof. 
\end{proof}

\begin{proof}[Proof of Theorem \ref{th.J-in-D}]
We first show that the space $\D(K)$ is continuously embedded in $L_M(\O,\lambda)$ for each compact $K\subseteq \O$. Trivially, the space $\D(K)$  is continuously embedded in $L^{\infty}(K,\lambda)$. As $K\in\Borel_b(\O)$, the control measure $\lambda$ is finite on $K$, and it follows that $L^{\infty}(K,\lambda)$ is continuously embedded in $L^2(K,\lambda)$. The latter is continuously embedded in $L_M(K,\lambda)$ by Lemma \ref{lem:L2Embedding}. Because  whenever ${\rm supp}(f)\subseteq K$ we have
	\[\int_{\O} \Phi_M(\abs{f(x)},x)\,\lambda(\mathrm{d}x)
		= \int_{K} \Phi_M(\abs{f(x)},x)\,\lambda(\mathrm{d}x), 
	\]
it follows that  $\D(K)$ is continuously embedded in $L_M(\O,\lambda)$. As $\D(\O)$ is the inductive limit of $\{\D(K_i)\}_{i\in\N}$, we thus conclude that $\D(\O)$ is continuously embedded in $L_M(\O,\lambda)$.
	
Let $\iota\colon\D(\O)\to L_M(\O,\lambda)$ be the continuous embedding. Then the mapping  $J_{\D}(t)\colon\D(\O)\to L^0(\Omega,P)$ can be represented as $J_{\D}(t) = J(t) \circ \iota$ for each $t\ge 0$, showing that $J_{\D}(t)$ is continuous. Lemma \ref{lem:JnDimLevy} shows that $J_{\D}$ is a cylindrical L\'evy process in $\D^\ast(\O)$ as defined in  \cite[Definition\ 3.6]{Fonseca2017}. Furthermore, since $J_{\D}(t)$ is continuous, and $\D(\O)$ is nuclear \cite[Theorem~51.5]{Treves1967} and ultrabornological \cite[Page~447]{Narici2011}, Theorem 3.8 in \cite{Fonseca2017} implies the existence of the $\D^\ast(\O)$-valued L\'evy process $Y$. 
\end{proof}

Let $\mathcal{S}(\R^d)$ denote the Schwartz space on $\R^d$, that is
	\begin{align*}
		\mathcal{S}(\R^d)
		:=\big\lbrace
			f\in C^{\infty}(\R^d)\colon
			\norm{f}_{\mathcal{S}_k} < \infty \text{ for all } k\in\N
		\big\rbrace, 
	\end{align*}
where the seminorms $\norm{\cdot}_{\mathcal{S}_k}$, $k\in\N$ are defined by 
\begin{align*}
	\norm{f}_{\mathcal{S}_k}
		:= \max_{\abs{s}\leq k} \sup_{x\in\R^d} (1+\abs{x}^2)^k \abs{\partial^s f(x)}. 
\end{align*}
In particular, $\mathcal{S}(\R^d)$ is metrisable, and $f_n\to f$ in $\mathcal{S}(\R^d)$ means $\norm{f_n-f}_{\mathcal{S}_k}\to 0$ for each $k\in\N$. 
The dual space of $\mathcal{S}(\R^d)$ is the space $\mathcal{S}^\ast(\R^d)$ of tempered distributions.

Define for each $t\ge 0$ the integral mapping 
\begin{align}
\label{eq:J-Schwartz}
	J_{\mathcal{S}}(t):\mathcal{S}(\R^d)\to L^0(\Omega,P)\,,
	\qquad
	J_{\mathcal{S}}(t)f=\int_{\R^d} f(x)\,M(t,{\rm d}x).
\end{align}
Clearly, the mapping $J_{\mathcal{S}}(t)$ is only well defined if $\mathcal{S}(\R^d)$ is embedded in $L_M(\R^d,\lambda)$. The following theorem gives an equivalent condition for this. 
\begin{theorem}
\label{thm:SchwartzEmbedding}
	Let $M$ be a L\'evy-valued random measure on $\Borel_b(\R^d)$ with control measure $\lambda$.
Then the following are equivalent:
\begin{enumerate}
\item[{\rm (a)}] $\mathcal{S}(\R^d)$ is continuously embedded in $L_M(\R^d,\lambda)$;
\item[{\rm (b)}] there exists an $r>0$ such that the function $x\mapsto(1+\abs{x}^2)^{-r}$ is in $L_M(\R^d,\lambda)$. 
\end{enumerate}	
In this case, the mapping $J_{\mathcal{S}}(t)$ as defined in \eqref{eq:J-Schwartz} is well-defined and continuous for each $t\geq 0$. Furthermore, there exists a genuine L\'evy process $(Y(t):\, t\ge 0)$ in $\mathcal{S}^{\ast}(\R^d)$ satisfying
	\begin{align*}
		\scapro{f}{Y(t)}=J_{\mathcal{S}}(t)f \qquad\text{for all }f\in \mathcal{S}(\R^d), \, t\ge 0 . 
	\end{align*}
\end{theorem}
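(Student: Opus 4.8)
The plan is to treat the equivalence and the two regularity statements in turn, using throughout the elementary observation that, for fixed $x$, the Musielak--Orlicz integrand $\Phi_M(\cdot,x)$ from \eqref{eq.def-Phi-M} is non-decreasing on $[0,\infty)$, continuous in its first argument, and vanishes at $0$. Each of its three summands is monotone in $u\ge 0$: one has $\sup_{\abs{c}\le 1}\abs{U(cu,x)}=\sup_{\abs{v}\le u}\abs{U(v,x)}$, the term $u^2g(x)$ is monotone, and $u\mapsto 1\wedge\abs{uy}^2$ is monotone for every $y$. This monotonicity is the workhorse of both directions.

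For (b)$\Rightarrow$(a), write $h_r(x):=(1+\abs{x}^2)^{-r}$. If $f\in\mathcal{S}(\R^d)$ then $(1+\abs{x}^2)^{\lceil r\rceil}\abs{f(x)}\le\norm{f}_{\mathcal{S}_{\lceil r\rceil}}$, so $\abs{f}\le Ch_r$ with $C=\norm{f}_{\mathcal{S}_{\lceil r\rceil}}$. The hypothesis gives $h_r\in L_M(\R^d,\lambda)$, hence $Ch_r\in L_M(\R^d,\lambda)$ since $L_M$ is a linear space; monotonicity of $\Phi_M$ then yields $\int_{\R^d}\Phi_M(\abs{f},x)\,\lambda(\mathrm{d}x)\le\int_{\R^d}\Phi_M(Ch_r,x)\,\lambda(\mathrm{d}x)<\infty$, so $f\in L_M$. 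This establishes the set inclusion $\mathcal{S}(\R^d)\subseteq L_M(\R^d,\lambda)$. Continuity of the inclusion I would obtain from the closed graph theorem: both $\mathcal{S}(\R^d)$ and $L_M(\R^d,\lambda)$ are F-spaces, convergence in $\mathcal{S}(\R^d)$ forces pointwise convergence while convergence in $L_M$ forces convergence in $\lambda$-measure, so any two limits agree $\lambda$-a.e.\ and the graph is closed.

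The implication (a)$\Rightarrow$(b) is the main obstacle, and it is where the metric structure of $L_M$ must be exploited quantitatively. Since the seminorms $\norm{\cdot}_{\mathcal{S}_k}$ increase in $k$, continuity of the embedding furnishes an index $k$ and a radius $\delta>0$ such that $\norm{f}_{\mathcal{S}_k}\le\delta$ forces $f$ into a set on which the modular is bounded, say $\int_{\R^d}\Phi_M(\abs{f},x)\,\lambda(\mathrm{d}x)\le 1$; this step uses the standard relation between the F-norm of \cite{Rajput1989} and the modular. I would then fix $r:=k+1$ and approximate $h_r$ from below by the compactly supported smooth functions $h_r^{(n)}(x):=(1+\abs{x}^2)^{-r}\psi_n(x)$, where $\psi_n\uparrow 1$ are increasing cut-offs with $\psi_n\equiv 1$ on $B(0,n)$ and $\abs{\partial^s\psi_n}\le C_sn^{-\abs{s}}$. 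Because $r>k$, the weight $(1+\abs{x}^2)^{k-r}$ is bounded and the factors $n^{-\abs{s}}$ absorb the derivatives of $\psi_n$, so $\sup_n\norm{h_r^{(n)}}_{\mathcal{S}_k}=:C_0<\infty$. Rescaling by $\delta/C_0$ and invoking the continuity bound gives $\int_{\R^d}\Phi_M\big(\tfrac{\delta}{C_0}h_r^{(n)},x\big)\,\lambda(\mathrm{d}x)\le 1$ uniformly in $n$; as $h_r^{(n)}\uparrow h_r$ and $\Phi_M$ is monotone and continuous in its first argument, monotone convergence yields $\int_{\R^d}\Phi_M\big(\tfrac{\delta}{C_0}h_r,x\big)\,\lambda(\mathrm{d}x)\le 1$, whence $h_r\in L_M$ by linearity. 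The delicate points are the uniform seminorm bound and the passage from the topological continuity statement to a genuine modular estimate.

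The remaining two assertions follow the pattern of Theorem \ref{th.J-in-D}. Granting (a), the embedding $\iota\colon\mathcal{S}(\R^d)\to L_M(\R^d,\lambda)$ is continuous, so $J_{\mathcal{S}}(t)=J(t)\circ\iota$ is well-defined and continuous, $J(t)$ being continuous by \eqref{eq:JLMtoL0}. Applying Lemma \ref{lem:JnDimLevy} to the images $\iota(f_1),\dots,\iota(f_n)$ shows that the finite-dimensional marginals of $J_{\mathcal{S}}$ are $\R^n$-valued L\'evy processes, so $J_{\mathcal{S}}$ is a cylindrical L\'evy process in $\mathcal{S}^\ast(\R^d)$ in the sense of \cite{Fonseca2017}. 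Since $\mathcal{S}(\R^d)$ is a nuclear Fr\'echet space, in particular ultrabornological and reflexive, and $J_{\mathcal{S}}(t)$ is continuous, the regularisation result \cite[Theorem~3.8]{Fonseca2017} produces the genuine $\mathcal{S}^\ast(\R^d)$-valued L\'evy process $Y$ with $\scapro{f}{Y(t)}=J_{\mathcal{S}}(t)f$.
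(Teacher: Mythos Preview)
Your argument is correct and reaches the same conclusions, but the implementation differs from the paper's in two places. For the continuity in (b)$\Rightarrow$(a), the paper argues directly: if $f_n\to 0$ in $\mathcal{S}(\R^d)$ one has a uniform bound $\abs{f_n}\le Kh_r$, hence $\Phi_M(\abs{f_n(x)},x)\le\Phi_M(Kh_r(x),x)\in L^1(\lambda)$, and dominated convergence together with $\Phi_M(0,x)=0$ finishes it; your closed graph argument is a legitimate alternative but requires checking that convergence in $L_M$ determines the $\lambda$-a.e.\ class (this follows from $\Phi_M(u,x)>0$ for $u>0$ and $\lambda$-a.e.\ $x$, which is implicit in \cite{Rajput1989}). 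For (a)$\Rightarrow$(b), the paper observes that continuity forces the embedding to factor through a single seminorm $\norm{\cdot}_{\mathcal{S}_k}$ (via \cite[p.~41, Prop.~4]{Jarchow1981}), extends $\iota$ by continuity to the $\norm{\cdot}_{\mathcal{S}_k}$-completion, and then notes that $x\mapsto(1+\abs{x}^2)^{-r}$ lies in that completion for $r\ge k$; your cut-off plus monotone-convergence route avoids the abstract extension step and is more constructive, at the cost of the Leibniz computation for $\sup_n\norm{h_r^{(n)}}_{\mathcal{S}_k}<\infty$. Both arguments rest on exactly the same two ingredients---monotonicity of $\Phi_M(\cdot,x)$ and the fact that a single seminorm controls the image---so the routes are variations rather than genuinely different strategies. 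The last part, invoking Lemma~\ref{lem:JnDimLevy} and \cite[Theorem~3.8]{Fonseca2017}, matches the paper verbatim.
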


\begin{proof}
We begin by showing the implication (b) $\Rightarrow$ (a), for which we suppose there exists $r>0$ such that $x\mapsto\big(1+\abs{x}^2\big)^{-r}$ is in $L_M(\R^d,\lambda)$. For each $f\in\mathcal{S}(\R^d)$ there exists $K>0$ such that $(1+\abs{x}^2)^{r}\abs{f(x)}\leq K$ for all $x\in\R^d$. Since  $\Phi_M(\cdot,x)$ is monotone for each $x\in\R^d$ according to \cite[Lemma\ 3.1]{Rajput1989}, we have 
	\begin{align*}
		\Phi_M(\abs{f(x)},x) \leq \Phi_M(K(1+\abs{x}^2)^{-r},x)
		\qquad\text{for each }x\in\R^d, 
	\end{align*}
which implies  $f\in L_M(\R^d,\lambda)$.
	
Let $(f_n)_{n\in\N}\subseteq\mathcal{S}(\R^d)$ be a sequence converging to $0$ in $\mathcal{S}(\R^d)$. As the convergence is uniform in $x$, we have the existence of another $K>0$ such that $(1+\abs{x}^2)^{r}\abs{f_n(x)}\leq K$ for all $x\in\R^d$ and for all $n\in\N$. For fixed $x\in\R^d$ we have $\Phi_M(\abs{f_n(x)},x)\to\Phi_M(0,x)=0$ by continuity \cite[Lemma~3.1]{Rajput1989}, and as $\int_{\R^d}\Phi_M(K(1+\abs{x}^2)^{-r},x)\,\lambda({\rm d}x)<\infty$,  Lebesgue's theorem for dominated convergence implies
	\begin{align*}
		\int_{\R^d}\Phi_M(\abs{f_n(x)},x)\,\lambda({\rm d}x)\to 0
		\qquad\text{as }n\to\infty, 
	\end{align*}
which completes the proof of the implication (b) $\Rightarrow$ (a).

Conversely, suppose $\mathcal{S}(\R^d)$ is continuously embedded in $L_M(\R^d,\lambda)$.  Thus, the identity mapping $\iota\colon \mathcal{S}(\R^d)\to L_M(\R^d,\lambda)$ is continuous. Then, there exists a neighbourhood 
\begin{align*}
		U(0;k,\delta):=		\big\{ f\in\mathcal{S}(\R^d) : \norm{f}_{\mathcal{S}_k} < \delta\big\}, 
	\end{align*}
for some $k\in\N$ and $\delta>0$ such that $\iota$ maps $U(0;k,\delta)$ into the open unit ball of $L_M(\R^d,\lambda)$. Let $(f_n)_{n\in\N}\subseteq\mathcal{S}(\R^d)$ be any sequence such that $\norm{f_n}_{\mathcal{S}_k}\to 0$. Then, $(f_n)$ is eventually in $U(0;k,\delta)$ and thus $(\iota f_n)$ is eventually in the unit ball and so is bounded in $L_M(\R^d,\lambda)$. By Proposition 4 of  \cite[p. 41]{Jarchow1981} we have the continuity of $\iota$ in the  semi-norm $\norm{\cdot}_{\mathcal{S}_k}$, and thus we may extend $\iota$ by continuity to the completion of $\mathcal{S}(\R^d)$ in this semi-norm. We thus obtain the integrability condition by observing that the $C^\infty(\R^d)$ mapping $x\mapsto (1+\abs{x}^2)^r$ has finite semi-norm $\norm{\cdot}_{\mathcal{S}_k}$ for $r\leq -k$.

As in the proof of Theorem \ref{th.J-in-D}, an application of Lemma \ref{lem:JnDimLevy} and Theorem 3.8 in \cite{Fonseca2017} establish  the existence of the L\'evy process $Y$ in ${\mathcal S}(\R^d)$. 
\end{proof}

\begin{remark}\label{re.Kabanava2008}
In Kabanava \cite{Kabanava2008}, it is shown that a Radon measure $\zeta$ can be identified with a tempered distribution in $\mathcal{S}^\ast(\R^d)$ if and only if there is a real number $r$ such that $x\mapsto (1+\abs{x}^2)^r$ is integrable over $\R^d$ with respect to $\zeta$. Our condition for the mapping $J_{\mathcal S}$ in Theorem 
\ref{thm:SchwartzEmbedding} is analogue.  
\end{remark}

\begin{remark}\label{re.M-as-both}
By Proposition \ref{prop:LVRMisISRM}, we may also view the L\'evy-valued random measure $M$ as an infinitely divisible random measure $M'$ on $\Borel_b(\Rp\times \O)$, and define the integral mapping
\begin{align*}
	J'_{\D}\colon\D((0,\infty)\times \O)\to L^0(\Omega,P)\,,
	\qquad
	J'_{\D}f=\int_{(0,\infty)\times \O} f(t,x)\,M'({\rm d}t,{\rm d}x).
\end{align*}
Analogously to Theorem \ref{th.J-in-D} we obtain an infinitely divisible random variable $Y'$ in $\D^{\ast}((0,\infty)\times \O)$ satisfying
\begin{align}
\label{eq:AltDistribRV}
	\scapro{f}{Y'}=J'_{\D}f
	\qquad \text{for all } f\in\D((0,\infty)\times \O) \,.
\end{align}
Similarly, under the conditions of Theorem \ref{thm:SchwartzEmbedding}, 
we may consider the operator $J'$ on the space $\mathcal{S}(\Rp\times \R^{d})$. 
\end{remark}

\begin{remark}\label{re.Levy-white-noise}
In a series of papers, e.g.\ \cite{Aziznejad2018,Dalang2015,Fageot2016,Fageot2017a}, Dalang, Humeau, Unser and co-authors have studied the L\'evy white noise $Z$ defined as a distribution. Here, 
$Z$ is defined as a cylindrical random variable in $\D^\ast(\R^d)$, i.e.\ a linear and continuous mapping $Z\colon \D(\R^d)\to L^0(\Omega,P)$, with characteristic function 
\begin{align*}
\phi_Z\colon \D(\R^d)\to {\mathbb C}, \qquad \phi_Z(f)=\exp\left(\int_{\R^d} \psi\big(f(x)\big) \,{\rm d}x\right),
\end{align*}
where $\psi\colon \R\to {\mathbb C}$ is defined by
\begin{align}\label{eq.char-Z-Unser}
\psi(u):=ipu-\tfrac{1}{2}\sigma^2 u^2 +\int_{\R}\big(e^{iuy}-1-iuy\1_{B_{\R}}(y)\big)\,\nu_0({\rm d}y),
	\end{align}
for some constants $p\in\R$ and $\sigma^2\in\Rp$ and a L\'evy measure $\nu_0$ on $\R$. 

Let $M$ be a L\'evy-valued random measure on $\Borel_b(\R^d)$  with characteristics $(\gamma,\Sigma,\nu)$ and $J_{\D}(t)$ the corresponding operator  
defined in \eqref{eq.def-J} for $t\ge 0$. By comparing the L\'evy symbol in \eqref{eq:MIntChars} with \eqref{eq.char-Z-Unser} it follows that, for fixed $t\ge 0$, the mapping $J_D(t)$ is a L\'evy white noise in the above sense, if and only if
	\[\gamma= p\cdot \leb, \qquad \Sigma = \sigma^2\cdot\leb,
	\qquad \nu=\leb\otimes\nu_0, \]
for some $p\in\R$, $\sigma^2\in\Rp$ and a L\'evy measure $\nu_0$ on $\R$.  It follows that $M(t,A) \buildrel\mathscr{D} \over = M(t,B)$ for any sets $A,B\in\Borel_b(\R^d)$ with $\leb(A)=\leb(B)$. In this case, we call  $M$ to be  {\em stationary in space}. 

Dalang and Humeau have shown in \cite{Dalang2015} that a L\'evy white noise in $\D^\ast(\R^d)$ with L\'evy symbol \eqref{eq.char-Z-Unser} takes values in $\mathcal{S}^\ast(\R^d)$ $P$-a.s.\ if and only if 
\begin{align*}
		\int_{\R}\big(\abs{y}^{\epsilon}\wedge\abs{y}^2\big)\,\nu_0({\rm d}y) < \infty
		\quad \text{for some }\epsilon>0\,.
\end{align*}
This result is analogue to our Theorem \ref{thm:SchwartzEmbedding}. However, as L\'evy-valued random measures are not necessarily stationary in space, our condition is more complex. For example, even in the pure Gaussian case with characteristics $(0,\Sigma,0)$, the measure $\Sigma$ must be tempered; cf. Remark \ref{re.Kabanava2008}. 

Regularity of the L\'evy white noise $Z$ in terms of Besov spaces is studied in 
\cite{Aziznejad2018}. Their results can be applied to a L\'evy-valued random measure if it is additionally assumed to be stationary in space, i.e.\ which can be considered as a L\'evy white noise in the above sense. We illustrate such an application in the following example. 
\end{remark}

\begin{example}\label{ex.alpha-in-Besov}
	Let $M$ be the $\alpha$-stable random measure, $\alpha\in(0,2)$, described in Example \ref{ex.Balan}. For simplicity we consider the symmetric case, i.e.\ 
$p=q=\tfrac12$. As the characteristics of $M$ is given by $(0,0,\leb\otimes\nu_{\alpha})$, it follows that $M$ is stationary in space. Thus, for a fixed time $t\ge 0$, the mapping $J_{\D}(t)$ or, equivalently the random variable $Y(t)$, where $Y$ denotes the L\'evy process derived in Theorem \ref{th.J-in-D}, can be considered as a L\'evy white noise in $\D^\ast(\R^d)$; see Remark \ref{re.Levy-white-noise}. Furthermore, since 
$\int_{\R}\big(\abs{y}^{\epsilon}\wedge\abs{y}^2\big)\,\nu_{\alpha}({\rm d}y) < \infty$ for $\epsilon<\alpha$, we have that $Y(t)$ is in $\mathcal{S}^*(\R^d)$ $P$-a.s. By applying the results from \cite{Aziznejad2018} we obtain the following: for $p\in(0,2)\cup 2\N\cup\{\infty\}$ and for all $t\geq 0$, we have, almost surely:
	\begin{align}
	\label{eq:BesovIn}
&		\text{if  } \tau<d\Big(\frac{1}{\alpha}-1\Big) \text{  and  } \rho<-\frac{d}{p\wedge\alpha}
		\text{  then  } Y(t)\in B_p^{\tau}(\R^d,\rho),\\
	\label{eq:BesovOut}
&		\text{if  } \tau>d\Big(\frac{1}{\alpha}-1\Big) \text{  or  } \rho>-\frac{d}{p\wedge\alpha}
		\text{  then  } Y(t)\notin B_p^{\tau}(\R^d,\rho) \,,
	\end{align}
	where $B_p^{\tau}(\R^d,\rho)$ is the weighted Besov space of integrability $p$, smoothness $\tau$ and asymptotic growth rate $\rho$. Furthermore, a modification of $Y$ is a L\'evy process in any Besov space satisfying \eqref{eq:BesovIn}, 
since its characteristic function is continuous in $0$, guaranteeing  stochastic continuity. 
\end{example}

\section{L\'evy-valued additive sheets}\label{se.additive-sheets}

Just as the Brownian sheet is the generalisation of a Brownian motion to a multidimensional index set, additive sheets are defined as the corresponding generalisation of an additive process.  
Adler et al.\ \cite{Adler1983} first defined additive random fields on $\mathbb{R}^d$, and termed them `L\'evy processes' should they be stochastically continuous. In \cite{Dalang1992}, Dalang and Walsh discuss L\'evy sheets in $\R^2$. Additive fields with stationary increments are considered by Barndorff-Nielsen and Pedersen in \cite{Barndorff-Nielsen-Pedersen} and are called `homogeneous L\'evy sheets'. 
Herein we present our definition based on the deposition of Dalang and Humeau in \cite{Dalang2015} which extends \cite{Adler1983}; this is also similar to the presentation by Pedersen in \cite{Pedersen2003}.

For $a,b\in\mathbb{R}^d$ write $a\leq b$ if $a_j\leq b_j$ for all $j=1,\ldots, d$ and similarly $a<b$, and define boxes $(a,b] := \{s\in\mathbb{R}^d:a<t\leq b\}$ and $[a,b] := \{s\in\mathbb{R}^d:a\leq t\leq b\}$; $[a,b)$ and $(a,b)$ are defined mutatis mutandi.

For a function $f\colon \mathbb{R}^d\to \R$,  we define the increment of $f$ over $(a,b]$ for $a$, $b\in\R^d$ with $a< b$ by
\[\Delta_a^b f := \sum_{\epsilon_1=0}^1 \cdots \sum_{\epsilon_k=0}^1 (-1)^{\epsilon_1+\cdots +\epsilon_k} f\big(c_1(\epsilon_1),\ldots c_k(\epsilon_k)\big),
\]
where $c_j(0)=b_j$ and $c_j(1)=a_j$. For example, in the case $d=2$ we have $\Delta_a^b f = f(b_1,b_2) - f(b_1,a_2) - f(a_1,b_2) + f(a_1,a_2)$.

The c\`adl\`ag property is generalised to random fields in the following way: 
a function $f\colon \R^d\to \R$ has \textup{limits along monotone paths (lamp)} if for every $x\in\mathbb{R}^d$ and any sequence $(x_n)_{n\in\N}\subseteq \R^d$ converging to $x$ with either $x_{n,i}< x_j$ or 
$x_{n,j}\ge x_j$ for all $n\in{\mathbb N}$ and $j\in \{1,\dots, d\}$ 
where $x=(x_1,\dots, x_d)$ and $x_n=(x_{n,1},\dots, x_{n,d})$,
the limit $f(x_n)$ exists as $n\to\infty$ and furthermore $f$ is right-continuous if 
$f(x_n)\to f(x)$ as $n\to\infty$ for all sequences  with 
$x\le x_n$ for all $n\in{\mathbb N}$. We note that this property is a path-based property, and thus in contrast to random measures we define our sheets as mappings from $\R^d\times\Omega\to\R$.

\begin{definition}
	\label{def:AddSheet}
	Let $I\subseteq\R^d$ with $0\in I$. 
	A real-valued stochastic process $(X(x):\, x\in I)$ is called an \textup{additive sheet} if the following conditions are satisfied:
	\begin{enumerate}
		\item[{\rm (a)}]  $X(x)=0$ a.s.\ for all $x=(x_1,\dots,x_d)\in I$
		with $x_j=0$ for some $j\in\{1,\dots, d\}$;
		\item[{\rm (b)}]  
		$\Delta_{a_1}^{b_1}X, \ldots ,\Delta_{a_n}^{b_n}X$ are independent
		for disjoint boxes $(a_1,b_1],\ldots ,(a_n,b_n]\subseteq I$;
		\item[{\rm (c)}]  $X$ is continuous in probability;
		\item[{\rm (d)}]  almost all sample paths of $X$ have limits along monotone paths and are right-continuous. 
	\end{enumerate}
\end{definition}

\begin{remark}
For relaxing the requirements in Definition \ref{def:AddSheet} we refer to \cite{Adler1983}, e.g. to capture arbitrary initial conditions or sheets which are not continuous in probability. In particular, it is shown that Conditions (a) -- (c) gurantee the  existence of a lamp and right-continuous modification.
\end{remark}

If $(X(x):\, x\in I)$ is an additive sheet then for fixed $x\in I$ the random variable $X(x)$ is infinitely divisible according to Theorem 3.1 in \cite{Adler1983}; let its characteristics be denoted by $(p_x,A_x,\mu_x)$. The additive sheet is said to be {\em natural} if the mapping $x\mapsto p_x$, which is necessarily continuous, is of bounded variation, or equivalently, if there exists an atomless signed measure $\gamma$ with $p_x=\gamma((0,x])$ for all $x\in I$; here,  we use the convention $(0,x]:=\prod_{i=1}^d I_i$ where, for $x=(x_1,\ldots,x_d)\in I$, $I_i:=(0,x_i]$ when $x_i>0$ and $I_i:=[x_i,0)$ when $x_i<0$. 
The notation of natural additive processes is introduced in Sato \cite{Sato-2004} for the case $d=1$. 

Similarly as for infinitely divisible random measures, we introduce a dynamical aspect in the following definition:
\begin{definition}
	\label{def:LVAS}
	A family $(X(t, \cdot):\, t\ge 0)$ of natural, additive sheets $(X(t,x):\, x\in\R^d)$ is called a {\em L\'evy-valued additive sheet} if for every $x_1,\dots, x_n\in\R^d$ and $n\in\N$, the stochastic process 
	\begin{align*}
		\Big(\big(X(t,x_1),\dots, X(t,x_n)\big):\, t\ge 0\Big)
	\end{align*}
	is a L\'evy process in $\R^n$.
\end{definition}

The wording `L\'evy-valued additive sheet' is motivated by the following result: 
\begin{proposition}
\label{prop:AddSheetOnProduct}
	A L\'evy-valued additive sheet $(X(t,\cdot):t\geq 0)$ forms a natural additive sheet $(X(z): z\in\Rp\times\R^d)$.
\end{proposition}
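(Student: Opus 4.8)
The plan is to define $X(z):=X(t,x)$ for $z=(t,x)\in\Rp\times\R^d$ and to verify Conditions (a)--(c) of Definition \ref{def:AddSheet} together with naturality directly; Condition (d) then follows for free, because the Remark after Definition \ref{def:AddSheet} (citing \cite{Adler1983}) guarantees that (a)--(c) already ensure the existence of a lamp and right-continuous modification, and the independence, stochastic-continuity and drift properties are all preserved under passing to a modification. Condition (a) is immediate: if the time coordinate of $z$ vanishes, then $X(0,x)=0$ a.s.\ since $s\mapsto X(s,x)$ is a L\'evy process and hence starts at the origin; if a spatial coordinate vanishes, then $X(t,x)=0$ a.s.\ by Condition (a) for the additive sheet $X(t,\cdot)$.

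The heart of the proof is Condition (b). Every box in $\Rp\times\R^d$ is a product $(s,t]\times(a,b]$ of a time interval and a spatial box, and the $(d+1)$-dimensional increment operator factorises as
\begin{align*}
	\Delta_{(s,a)}^{(t,b)}X=\Delta_a^b X(t,\cdot)-\Delta_a^b X(s,\cdot),
\end{align*}
which one reads off by splitting the alternating sum defining $\Delta$ according to the two values of the time-corner. Given finitely many disjoint boxes $B_1,\dots,B_n$, I would pass to the common grid generated by all their temporal endpoints $0\le s_0<\dots<s_p$ and all their spatial faces; this partitions space into finitely many elementary cells $C_1,\dots,C_q$, each box $B_i=(s_i,t_i]\times(a_i,b_i]$ becomes a disjoint union of elementary product cells $(s_{k-1},s_k]\times C_l$, and distinct $B_i$ involve disjoint collections of such cells. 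Setting $V_l(s):=\Delta_{C_l}X(s,\cdot)$, the $\R^q$-valued process $V=(V_1,\dots,V_q)$ is a fixed linear image of the finite-dimensional L\'evy process built from $X(\cdot,y)$ over the finitely many corners $y$ of the cells, hence is itself a L\'evy process in $\R^q$. The increment over a cell $(s_{k-1},s_k]\times C_l$ equals $V_l(s_k)-V_l(s_{k-1})$, and by finite additivity of $\Delta$ each $\Delta_{B_i}X$ is a sum of such cell increments over disjoint index sets; it therefore suffices to prove that the family $\{V_l(s_k)-V_l(s_{k-1})\}_{k,l}$ is jointly independent. Independence across different $k$ is the independent-increments property of $V$ over the disjoint time intervals; independence across $l$ for fixed $k$ follows from the distributional identity $V(s_k)-V(s_{k-1})\buildrel\mathscr{D}\over=V(s_k-s_{k-1})=\big(\Delta_{C_1}X(\tau,\cdot),\dots,\Delta_{C_q}X(\tau,\cdot)\big)$ with $\tau:=s_k-s_{k-1}$, whose components are independent because $X(\tau,\cdot)$ is an additive sheet and the $C_l$ are disjoint, independence of components being a property of the joint law alone. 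Combining the two yields joint independence of the whole family, and hence independence of $\Delta_{B_1}X,\dots,\Delta_{B_n}X$.

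For Condition (c), given $z_n=(t_n,x_n)\to z=(t,x)$ I would split
\[X(t_n,x_n)-X(t,x)=\big(X(t_n,x_n)-X(t_n,x)\big)+\big(X(t_n,x)-X(t,x)\big).\]
The second summand is a temporal increment of the L\'evy process $s\mapsto X(s,x)$ and tends to $0$ in probability by its stochastic continuity. The first summand is $Z^{(n)}(t_n)$, where $Z^{(n)}(s):=X(s,x_n)-X(s,x)$ is a one-dimensional L\'evy process with $Z^{(n)}(1)\to0$ in probability by the spatial stochastic continuity of $X(1,\cdot)$. I expect this to be the \emph{main obstacle}: the base point $x_n$ varies simultaneously with the time $t_n$, so neither spatial nor temporal stochastic continuity applies directly. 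I would resolve it at the level of characteristic functions: with $\varphi_{Z^{(n)}(1)}=\exp(\psi_n)$ for the continuous L\'evy exponent $\psi_n$, one has $\varphi_{Z^{(n)}(t_n)}(u)=\exp(t_n\psi_n(u))$; since the laws of $Z^{(n)}(1)$ converge to $\delta_0$ and are infinitely divisible, the continuity theorem for infinitely divisible distributions forces $\psi_n\to0$ pointwise, and as $t_n\to t$ is bounded we obtain $t_n\psi_n(u)\to0$, whence $Z^{(n)}(t_n)\to0$ in distribution and hence in probability.

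Finally, naturality: for fixed $x$ the drift of $X(t,x)$ scales linearly in time, $p_{(t,x)}=t\,p_{(1,x)}$, since $s\mapsto X(s,x)$ is a L\'evy process; for fixed $t$ the naturality of $X(t,\cdot)$ provides an atomless signed measure $\gamma^{(t)}$ on $\R^d$ with $p_{(t,x)}=\gamma^{(t)}((0,x])$. Comparing the two gives $\gamma^{(t)}=t\,\gamma^{(1)}$, so that $p_z=\big(\leb\otimes\gamma^{(1)}\big)((0,z])$ with $\leb\otimes\gamma^{(1)}$ an atomless signed measure on $\Rp\times\R^d$; this is precisely the naturality condition. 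Having established (a)--(c) and naturality, the modification result cited above supplies (d), and $(X(z):z\in\Rp\times\R^d)$ is a natural additive sheet.
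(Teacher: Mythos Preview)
Your proof is correct, though it proceeds somewhat differently from the paper's. The paper dismisses Conditions (a), (b) and (d) as ``clearly met'' and focuses entirely on stochastic continuity, whereas you spell out (b) via a careful common-grid argument and obtain (d) by invoking the modification result from the Remark after Definition~\ref{def:AddSheet}. For Condition (c) the paper first treats the special case $(t_n,x_n)\to(0,x)$: it invokes \cite{Adler1983} to write the characteristics of $X(1,x_n)$ as $(\gamma((0,x_n]),\Sigma((0,x_n]),\nu((0,x_n]\times\cdot))$ for fixed measures $\gamma,\Sigma,\nu$, bounds the L\'evy symbol of $X(t_n,x_n)$ by $t_n$ times a constant depending only on a bounded box containing all $(0,x_n]$, and then reduces the general case to this one via stationary increments in $t$. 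Your splitting instead isolates the spatial increment $Z^{(n)}(t_n)=X(t_n,x_n)-X(t_n,x)$ and handles it abstractly through the continuity theorem for infinitely divisible laws, avoiding any appeal to the explicit measure-valued description of the characteristics. Both routes are valid; yours is a little more self-contained, while the paper's explicit bound dovetails directly with the identification $p_z=t\gamma((0,x])$ used for naturality.
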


\begin{proof}
	The domain of definition and Conditions (a), (b) and (d) of Definition \ref{def:AddSheet}  are clearly met. Regarding stochastic continuity, let $(t_n,x_n)_{n\in\N}$ be a sequence in $\Rp\times\R^d$ converging to $(0,x)$. 
For each $n\in\N$ the random variable $X(1,x_n)$ is infinitely divisible, say with characteristics $(p_{x_n},V_{x_n},\mu_{x_n})$. As $X(1,\cdot)$ is a natural, additive sheet, there exists a signed measure $\gamma$ such that $p_{x_n}=\gamma((0,x_n])$. Since the L\'evy process $(X(t,x_n):\, t\ge 0)$ has stationary increments, it follows that each $X(t,x_n)$ has characteristics $(tp_{x_n},tV_{x_n},t\mu_{x_n})$ for every $t\ge 0$.
Theorem 3.1 in \cite{Adler1983} implies that there exist a measure $\Sigma$ on $\Borel(\R^d)$ such that $V_{x_n}=\Sigma((0,x_n])$, and a measure $\nu$ on $\Borel(\R^d\times\R)$ such that, for each $B\in\Borel(\R)$, the mapping $\nu(\cdot\times B)$ is a measure on $\Borel(\R^d)$, and $\mu_{x_n}=\nu((0,x_n]\times\cdot)$. Therefore, the L\'evy symbol of $X(t_n,x_n)$ is given by, for $u\in\R$,
	\begin{align*}
		\Psi_{X(t_n,x_n)}(u)
			= t_n\Big(& iu\gamma((0,x_n])-\tfrac{1}{2}u^2\Sigma((0,x_n]) \\
				& + \int_{(0,x_n]\times\R} \big(e^{iuy}-1-iuy\1_{B_{\R}}(y)\big)\,\nu({\rm d}x,{\rm d}y)
			\Big)
	\,.\end{align*}
As the set $\{x_n:\, n\in\N\}$ is bounded, there exists a bounded box $I\subseteq\R^d$ containing every box $(0,x_n],\,n\in\N$. Thus, we obtain for each $u\in\R$ that
	\begin{align*}
		\abs{\Psi_{X(t_n,x_n)}(u)}
		\leq t_n \left(u\norm{\gamma}_{TV}(I) + \tfrac{1}{2}u^2\Sigma(I) +  
		 \int_{I\times\R} \big(u^2y^2\wedge 1 \big)\,\nu({\rm d}x,{\rm d}y)\right).
	\end{align*}
Finiteness of the right side follows from the fact that the measures are finite on $I$. Therefore, it follows that  $X(t_n,x_n)\to 0$ in probability as $(t_n,x_n)$ converges to $(0,x)$. If $(t_n,x_n)$ is an arbitrary sequence converging to $(t,x)$, stationary increments imply for each $c>0$ that 
\begin{align*}
&	P(\abs{X(t_n,x_n)-X(t,x)}>c)\\
&\qquad \leq P(\abs{X(t_n,x_n)-X(t,x_n)}>\tfrac{c}{2}) + P(\abs{X(t,x_n)-X(t,x)}>\tfrac{c}{2}) \\
&\qquad  = P(\abs{X(t_n-t,x_n)}>\tfrac{c}{2}) + P(\abs{X(t,x_n)-X(t,x)}>\tfrac{c}{2}).
\end{align*}
Consequently, the above established continuity in probability shows the general case.

	The fact that $X(z)$ is natural can be seen from the form of the characteristic function, where we have $p_z=t\gamma((0,x])$ for $z=(t,x)$.
\end{proof}

We are now able to state the link between L\'evy-valued random measures and L\'evy-valued additive sheets. Pedersen showed a similar result in \cite{Pedersen2003}. For the convenience of the reader we present the proof in our setting with some minor modifications. 
\begin{theorem}
\label{thm:NatAddLevyMeasure}
	\begin{enumerate}
		\item[{\rm (a)}] Let $(X(t,\cdot):\, t\ge 0)$ be a L\'evy-valued additive sheet. Then there exists a unique L\'evy-valued random measure $M$ on $\Borel_b(\R^d)$ with atomless control measure $\lambda$ satisfying 	
		\begin{align*}
		M(t,(0,x])=X(t,x)\quad\text{ $P$-a.s.\ for each $t\ge 0$ and $x\in\R^d$.} 
		\end{align*}

		\item[{\rm (b)}] Let $M$ be a  L\'evy-valued random measure on $\Borel_b(\R^d)$ with atomless control measure $\lambda$. Then any lamp and right-continuous modification of the stochastic process $X=(X(t,x):\, t\ge 0,\, x\in\R^d)$ defined by
			\[X(t,x):=
			\begin{cases}
			0, & \text{if $x_j=0$ for some } j=1,\dots, d\\
			M\big(t,(0,x]\big), &\text{else}
			\end{cases}
			\]
		is a L\'evy-valued additive sheet. 
	\end{enumerate}
\end{theorem}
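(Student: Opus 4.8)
The plan for (a) is to reconstruct the random measure from the sheet through its box increments. For each $t\ge 0$ I would define a set function on the boxes $(a,b]\subseteq\R^d$ by $M(t,(a,b]):=\Delta_a^b X(t,\cdot)$ and extend it additively to the ring $\mathcal{R}$ of finite disjoint unions of such boxes. Condition (b) of Definition \ref{def:AddSheet} gives independent scattering on $\mathcal{R}$, while the characteristics of $X(1,\cdot)$ identified in the proof of Proposition \ref{prop:AddSheetOnProduct}---the measures $\gamma,\Sigma,\nu$ with $p_x=\gamma((0,x])$, $V_x=\Sigma((0,x])$ and $\mu_x=\nu((0,x]\times\cdot)$---provide an explicit bound on the characteristic exponent of $M(t,B)$, $B\in\mathcal{R}$, exactly as in the proof of Proposition \ref{prop:LVRMisISRM}(a). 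This bound is finitely additive and controlled by the finite measure $\lambda$ on bounded sets, so that $M(t,B_n)\to 0$ in probability whenever $B_n\downarrow\emptyset$ in $\mathcal{R}$; Theorem 2.15 in \cite{Kallenberg2017}, applied to positive and negative parts, then extends $M(t,\cdot)$ to a unique independently scattered random measure on $\Borel_b(\R^d)$, with infinite divisibility inherited from $X$.

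Because $X(t,x)$ vanishes whenever a coordinate of $x$ equals $0$ (Condition (a)), every term of $\Delta_0^x X(t,\cdot)$ except the one with no vanishing coordinate is zero, so $M(t,(0,x])=X(t,x)$; the identity in the remaining orthants follows from the sign convention used to define \emph{natural}. The Lévy-valued property, i.e.\ that $(M(t,\cdot):t\ge 0)$ is a L\'evy-valued random measure, I would deduce from the fact that each $M(t,B)$ with $B\in\mathcal{R}$ is a fixed linear combination of the coordinates of the time-L\'evy process $(X(t,x_1),\dots,X(t,x_n))$, so the defining finite-dimensional processes are L\'evy; this passes to all of $\Borel_b(\R^d)$ through the limit in the extension. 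Atomlessness of $\lambda$ I would extract from the stochastic continuity (Condition (c)) of $X$: an atom $\lambda(\{x\})>0$ would force $M(1,\cdot)$, hence $X(1,\cdot)$, to possess a fixed point of discontinuity. Uniqueness is then immediate, since two such random measures agree on all boxes through the increment relation, hence on $\mathcal{R}$, hence on $\Borel_b(\R^d)$ by uniqueness of the extension.

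For (b) I would verify the conditions of Definition \ref{def:AddSheet} for the process $X$ built from $M$, working on the product space $\Rp\times\R^d$. Condition (a) holds by construction, together with $X(0,\cdot)=0$ since $M(0,\cdot)=0$; Condition (b) follows because the increment of $X$ over a box $(s,t]\times(a,b]$ equals $M'((s,t]\times(a,b])$ for the random measure $M'$ of Proposition \ref{prop:LVRMisISRM}, so disjoint boxes yield independent increments by independent scattering. The crucial point is Condition (c): for $(t_n,x_n)\to(t,x)$, the difference $X(t_n,x_n)-X(t,x)$ is a signed combination of values of $M'$ over boxes whose total $(\leb\otimes\lambda)$-measure tends to $0$, because $\lambda$ is atomless and so the symmetric differences of the boxes $(0,x_n]$ shrink in control measure; continuity of the integral map $J(t)\colon L_M(\R^d,\lambda)\to L^0(\Omega,P)$ then yields convergence in probability.

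Once (a)--(c) hold on $\Rp\times\R^d$, the modification result quoted after Definition \ref{def:AddSheet} (from \cite{Adler1983}) guarantees a lamp and right-continuous modification; any such modification inherits (a)--(c) as almost sure and distributional properties and satisfies (d) by assumption, and restricts for each fixed $t$ to a natural additive sheet in $x$---naturality holding because the drift measure $\gamma$ is dominated by the atomless $\lambda$. The time-L\'evy property demanded by Definition \ref{def:LVAS} finally follows from Lemma \ref{lem:JnDimLevy} applied to the indicators $f_i=\1_{(0,x_i]}$, these finite-dimensional laws being preserved under modification. The main obstacle throughout is Condition (c): in (a) it is what forces the control measure to be atomless, and in (b) it is precisely where the atomlessness hypothesis is consumed to recover stochastic continuity, so that assumption is essential and must be handled through the shrinking-box estimate rather than by any soft argument.
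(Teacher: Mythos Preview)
Your argument for (a) is correct but follows a genuinely different route from the paper. The paper invokes the L\'evy--It\^o decomposition of the additive sheet from \cite[Theorem~4.6]{Adler1983}, writing $X(t,x)=t\gamma((0,x])+G(t,x)+\int_{B_\R}y\,\widetilde{N}_{t,x}({\rm d}y)+\int_{B_\R^c}y\,N_{t,x}({\rm d}y)$, then separately (i) builds a Poisson random measure $N$ on $\Rp\times\R^d\times\R$ from the jump measures $N_{t,x}$ and feeds it into Proposition~\ref{prop:LRM} to obtain the Poisson part $M_0$, and (ii) applies the box-extension via \cite[Theorem~2.15]{Kallenberg2017} only to the continuous Gaussian part $G$ to obtain $M_1$; finally $M=M_0+M_1$. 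You instead run the box-extension argument of Proposition~\ref{prop:LVRMisISRM}(a) directly on the full sheet $X$, bounding the characteristic exponent by the finitely additive quantity built from $(\gamma,\Sigma,\nu)$ and invoking Kallenberg once. This is more economical and avoids the L\'evy--It\^o machinery altogether; what the paper's detour buys is that the L\'evy--It\^o decomposition \eqref{eq.Levy-Ito-for-M} of $M$ drops out for free, which is precisely what the Remark following the theorem records and what feeds into the Fubini argument in Theorem~\ref{th.X-in-D}. Your proof of (b) is essentially the same as the paper's, only spelled out in somewhat more detail on the product space; the paper reduces to checking that $X(1,\cdot)$ is an additive sheet and dispatches stochastic continuity in one line via $M(1,\{x\})=0$.
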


\begin{proof}
	(a) As in the proof of Proposition \ref{prop:AddSheetOnProduct} let $(\gamma, \Sigma, \nu)$ denote the characteristics of $X(1,\cdot)$. 
	By the L\'evy-It\^o decomposition \cite[Theorem~4.6]{Adler1983}, we may write,  	
	\[X(t,x) = t\gamma((0,x]) + G(t,x)
		+ \int_{B_{\R}} y\,\widetilde{N}_{t,x}({\rm d}y)
		+ \int_{B_{\R}^c} y\,N_{t,x}({\rm d}y)
		\qquad P\text{-a.s.},
	\]
where $G$ is a continuous Gaussian additive sheet and, for each $(t,x)\in\Rp\times\R^d$,  $N_{t,x}$ is a Poisson random measure on $\R$ independent of $G$ with intensity measure $t\cdot\nu((0,x]\times\cdot)$, with $\widetilde{N}_{t,x}$ the compensated Poisson random measure. Furthermore, for fixed $B\in\Borel(\R)$ such that $0\notin\widebar{B}$, the process $(N_{t,x}(B):(t,x)\in\Rp\times\R^d)$ is an additive sheet \cite[Proposition~4.4]{Adler1983}.
Let $\mathcal{R}$ be the ring consisting of finite disjoint unions of the form $(s,t]\times(w,x]\times B$ for some $s,t\in\Rp$ with $s< t$, $w,x\in\R^d$ with $w<x$ and $B\in\Borel(\R)$. On $\mathcal{R}$ we define
\[N\left(\bigcup_{i=1}^n(s_i,t_i]\times(w_i,x_i)\times B_i\right):=\sum_{i=1}^n \big(N_{t_i,x_i}(B_i)-N_{s_i,w_i}(B_i)\big),
\]
where the sets $(s_i,t_i]\times(w_i,x_i)\times B_i$, $i=1,\ldots n$ are disjoint.
We see that $N$ is finitely additive, and independently scattered. By the same argument as in Proposition \ref{prop:LVRMisISRM} we obtain the unique extension of $N$ to an independently scattered random measure on $\Rp\times\R^d\times\R$, and it immediately follows that $N$ is a Poisson random measure with intensity $\leb\otimes\nu$.

Let $\widetilde{N}$ be the compensated random measure of $N$, then Proposition \ref{prop:LRM} implies that the mapping $M_0\colon \Borel_b(\Rp\times\R^d)\to L^0(\Omega,P)$ defined by
	\begin{align*}
		M_0(B)
		:= (\leb\otimes\gamma)(B)
		+ \int_{B\times B_{\R}} y\,\widetilde{N}({\rm d}s,{\rm d}x,{\rm d}y)
		+ \int_{B\times B_{\R}^c} y\,N({\rm d}s,{\rm d}x,{\rm d}y),
	\end{align*}
	defines a L\'evy-valued random measure $M_0$ on $\Borel_b(\R^d)$ by $M_0(t,A):=M_0((0,t]\times A)$. It satisfies $M_0(t,(0,x])=X(t,x)-G(t,x)\;P$-a.s. 
	for all $t\ge 0$ and $x\in\R^d$. Uniqueness of $M_0$ follows from that of $N$.
	
Let $\mathcal{R}$ be the ring consisting of finite unions of disjoint half-open boxes of the form $(w,z]$ for some $w,z\in\Rp\times\R^d$ with $w<  z$. On $\mathcal{R}$ we define
\[M_1\left(\bigcup_{i=1}^n(w_i,z_i]\right):=\sum_{i=1}^n \big(G(z_i)-G(w_i)\big),
\qquad w_1< z_1\le w_2 <  \cdots \le w_n<z_n.
\]
Let $(I_n)\subseteq\mathcal{R}$ be a sequence of boxes decreasing to $\emptyset$. Stochastic continuity of $G$ implies 
$G(I_n)\to 0$ in probability as $n\to\infty$. Since $M_1$ is  clearly additive on $\mathcal{R}$,  Theorem 2.15 in \cite{Kallenberg2017} implies,  by considering positive and negative parts separately, that  $M_1$ uniquely extends to an independently scattered random measure on $\Borel_b(\Rp\times\R^d)$.
	
	We now define $M(t,A):=M_0(t,A)+M_1((0,t]\times A)$ for $t\geq 0$ and $A\in\Borel_b(\R^d)$. The random measure $M$ satisfies the statement (a), where the control measure $\lambda$ is atomless by the stochastic continuity of $X$.

	(b) It suffices to check that the process $(X(1,x):x\in\R^d)$ is an additive sheet. Conditions (a) and (b) of Definition \ref{def:AddSheet} are immediately implied by properties of $M$. 
For establishing stochastic continuity, let $(x_n)\subseteq \R^d$ converges to $x$. It follows for fixed $c>0$ that
	\begin{align*}
		P(\abs{X(1,x)-X(1,x_n)}>c)
		= P(\abs{M(1,(x_n,x])}>c)
		\to 0,
	\end{align*}
	by the atomlessness of $\lambda$, as this implies $M(1,\{x\})=0$ $P$-a.s.\ for each $x\in\R^d$.
\end{proof}

\begin{remark}
Theorem \ref{thm:NatAddLevyMeasure} and its proof enables us to conclude a converse implication of Proposition \ref{prop:LRM}. If $M$ is a L\'evy-valued random measure $M$ with atomless control measure $\lambda$, then it satisfies  a L\'evy-It\^o decomposition of the form
	\begin{align}\label{eq.Levy-Ito-for-M}
		M(t,A)
		&= t\gamma(A)
		+ G\big((0,t]\times A\big)\\
		&+ \int_{(0,t]\times A\times B_{\R}} y\,\widetilde{N}({\rm d}s,{\rm d}x,{\rm d}y)
		+ \int_{(0,t]\times A\times B_{\R}^c} y\,N({\rm d}s,{\rm d}x,{\rm d}y),
	\end{align}
where $\gamma$ is a signed measure on $\O$, $G$ is a Gaussian  random measure on $\Borel_b(\Rp\times\O)$ and $N$ is an independent Poisson random measure on $\Borel_b(\Rp\times\O\times\R)$ with compensated part $\widetilde{N}$. 

Furthermore, we see that one does not achieve larger generality by allowing an arbitrary measure space $(U,\mathcal{U},\nu)$ in Proposition  \ref{prop:LRM},  as the Poissonian components can be represented as integrals over $\mathbb{R}$.
\end{remark}

For introducing a stochastic integral of deterministic functions $f\colon \R^d\to \R$  with respect to a L\'evy-valued sheet one could follow the standard approach by starting with simple functions and extending the integral operator by continuity. Instead, for simplifying our presentation, we utilise the correspondence between L\'evy-valued additive sheets and L\'evy valued random measures, established in Theorem \ref{thm:NatAddLevyMeasure}, and refer to the integration for the latter developed in Rajput and Rosinksi \cite{Rajput1989} as presented in  Section \ref{subsec:IDRM}. For a L\'evy-valued additive sheet $(X(t,x):\, t\ge 0, \, x\in \R^d)$ let $M$ denote the corresponding L\'evy-valued random measure on $\Borel_b(\R^d)$ with control measure $\lambda$. Then we define for all 
$f\in L_{M}(\R^d,\lambda)$, $A\in\Borel(\R^d)$ and $t\geq 0$:
\begin{align}\label{eq.def-intX}
	\int_{A} f(x)\,{\rm d}X(t,x)
	:= \int_{A} f(x)\,M(t,{\rm d}x). 
\end{align}

Let $(X(t,x):\, x\in\R^d,\,t\ge 0)$ be a L\'evy-valued additive sheet and $\O\subseteq \R^d$ be open. Then the definition in \eqref{eq.def-intX} allows us to define the same operator $J_{\D}(t)$ as introduced in \eqref{eq.def-J} for a L\'evy-valued additive sheet $X$:
\begin{align}\label{eq.def-J-X}
	J_{\D}(t)\colon  \D(\O)\to L^0(\Omega,P), \quad J_{\D}(t)f=\int_{\O} f(x)\, {\rm d}X(t,x). 
\end{align}
Theorem \ref{th.J-in-D} guarantees that $J_{\D}$ is well-defined and even more, induces a genuine L\'evy process $Y$ in $\D^\ast(\O)$.  Because of the framework of a sheet as a function in $d$ arguments, we can define the operator 
	\begin{align}\label{eq.def-I}
		I_{\D}(t)\colon \D(\O)\to L^0(\Omega,P), \qquad I_{\D}(t)(f)=\int_{\O} f(x)X(t,x)\, {\rm d}x\,. 
	\end{align}
The mapping $I_{\D}$ is well defined because of the lamp property of $X(t,\cdot)$ for each $t\ge 0$ and as each $f\in\D(\O)$ has compact support in $\O$. Lebesgue's dominated convergence theorem shows that $I_{\D}$ is continuous, as every convergent sequence in $\D(\O)$ is uniformly bounded and compactly supported.

The following establishes the relation
	\begin{align*}
		(-1)^{d} I_{\D}(t)(\dot{f})=J_{\D}(t)(f)
		\qquad\text{for all }f\in \D(\O). 
	\end{align*}
In other words, if we neglect the embedding by the operators $I_{\D}$ and $J_{\D}$, we could interpret this result that $M$ is the weak derivative of $X$. This is not very surprising, since, if we adapt notions from classical measure theory, the relation $M(t,(0,x])=X(t,x)$ derived in Theorem \ref{thm:NatAddLevyMeasure}, can be seen that $X$ is the cumulative distribution function of the random measure $M$.

\begin{theorem}\label{th.X-in-D}
For a L\'evy-valued additive sheet $(X(t,\cdot):\,  t\ge 0)$  and an open set $\O\subseteq \R^d$ let $I_{\D}$ be defined by \eqref{eq.def-I}. 
Then there exists a stochastic process $(V(t):\, t\ge 0)$ in $\D^\ast(\O)$ satisfying 
	\begin{align*}
		\scapro{f}{V(t)}=I_{\D}(t)f\qquad\text{for all }f\in\D(\O), \, t\ge 0.
	\end{align*}
	Furthermore, we have the equality 
	\begin{align}
	\label{eq:IDiffJ}
		(-1)^{d} I_{\D}(t)(\dot{f})=J_{\D}(t)(f)
		\qquad\text{for all }f\in \D(\O)\,, 
	\end{align}
	where $\dot{f}=\tfrac{\partial^d }{\partial x_1\cdots \partial x_d}f $ and $J_{\D}(t)$ denotes the operator in \eqref{eq.def-J-X}.
\end{theorem}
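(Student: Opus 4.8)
The plan is to treat the two assertions separately, since they are of quite different character. For the existence of $V$, I would argue that, in contrast to Theorem \ref{th.J-in-D}, \emph{no} regularisation procedure is needed, because the sheet $X(t,\cdot)$ is already a genuine random field. Indeed, by Condition (d) of Definition \ref{def:AddSheet}, for each fixed $t$ almost every sample path $x\mapsto X(t,x)(\omega)$ has limits along monotone paths and is right-continuous, hence is locally bounded, hence locally Lebesgue integrable; it therefore induces a regular distribution $V(t)(\omega)\in\D^\ast(\O)$ by
\begin{align*}
\scapro{f}{V(t)(\omega)}:=\int_{\O} f(x)\,X(t,x)(\omega)\,{\rm d}x=I_{\D}(t)(f)(\omega).
\end{align*}
Since $\D(\O)$ is separable, the Borel $\sigma$-algebra on $\D^\ast(\O)$ is generated by the evaluation maps $T\mapsto\scapro{f}{T}$, so measurability of $\omega\mapsto V(t)(\omega)$ follows from the fact that each $I_{\D}(t)f$ is a random variable. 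This settles the first claim.

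For the identity \eqref{eq:IDiffJ}, the idea is a $d$-fold integration by parts made rigorous through the correspondence $X(t,x)=M(t,(0,x])$ of Theorem \ref{thm:NatAddLevyMeasure}. Extending $\dot f$ by zero outside $\O$ (its support remains inside $\O$), and regarding $M$ as a measure on $\Borel_b(\R^d)$, I would first record the purely deterministic computation
\begin{align*}
\int_{\R^d} \dot f(x)\,\1_{(0,x]}(y)\,{\rm d}x = (-1)^d f(y)
\qquad\text{for all }y\in\R^d,
\end{align*}
which follows by integrating $\partial_1\cdots\partial_d f$ over the orthant $\{x:x\ge y\}$ and applying the fundamental theorem of calculus in each coordinate: the boundary terms at infinity vanish because $f$ has compact support, and each of the $d$ integrations contributes a factor $-1$. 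The box conventions for components $y_i<0$ are handled identically and yield the same sign. Granting an interchange of the Lebesgue integral in $x$ with the integral against $M(t,\cdot)$, and using $\int_{\R^d}\1_{(0,x]}(y)\,M(t,{\rm d}y)=M(t,(0,x])=X(t,x)$, this produces
\begin{align*}
I_{\D}(t)(\dot f)=\int_{\R^d}\dot f(x)\,X(t,x)\,{\rm d}x
=(-1)^d\int_{\R^d} f(y)\,M(t,{\rm d}y)
=(-1)^d J_{\D}(t)(f),
\end{align*}
which rearranges to \eqref{eq:IDiffJ} after multiplication by $(-1)^d$.

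The interchange is the only substantial point, and I would justify it by Riemann-sum approximation rather than by invoking a general stochastic Fubini theorem. Fix $t$, let $K$ be a compact box containing $\mathrm{supp}(f)\cup\{0\}$ and every relevant box $(0,x]$, and choose partitions of $\mathrm{supp}(\dot f)$ with tag points $x_j$, cells $C_j$ and mesh tending to $0$. Set $g_n:=\sum_j \dot f(x_j)\,\leb(C_j)\,\1_{(0,x_j]}$, a simple function supported in $K$. On the one hand, $\int_{\R^d} g_n(y)\,M(t,{\rm d}y)=\sum_j \dot f(x_j)\,\leb(C_j)\,X(t,x_j)$ is a tagged Riemann sum for $\int_{\R^d}\dot f(x)X(t,x)\,{\rm d}x$; since $x\mapsto X(t,x)(\omega)$ is bounded and its discontinuities lie on the countably many axis-parallel hyperplanes determined by the atoms of $M(t,\cdot)$, hence form a Lebesgue-null set, the Riemann--Lebesgue criterion gives $P$-a.s. convergence of these sums to $I_{\D}(t)(\dot f)$. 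On the other hand, the $g_n$ are uniformly bounded and converge pointwise to $(-1)^d f$ by the deterministic computation above; as $\lambda(K)<\infty$, dominated convergence yields $g_n\to(-1)^d f$ in $L^2(K,\lambda)$, and Lemma \ref{lem:L2Embedding}, applied to $M$ restricted to $K$ exactly as in the proof of Theorem \ref{th.J-in-D}, upgrades this to convergence in $L_M(\R^d,\lambda)$.

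Combining these, continuity of $J(t)$ on $L_M(\R^d,\lambda)$ gives $\int g_n\,{\rm d}M(t)\to(-1)^d J_{\D}(t)(f)$, while the Riemann-sum convergence gives $\int g_n\,{\rm d}M(t)\to I_{\D}(t)(\dot f)$; equating the two limits proves the identity. The hard part will be precisely this double convergence: reconciling almost-sure Riemann-sum convergence for the pathwise integral defining $I_{\D}$ with convergence in the Musielak--Orlicz metric of $L_M(\R^d,\lambda)$. The reduction to $L^2$ via Lemma \ref{lem:L2Embedding}, the continuity of $J(t)$, and the observation that the path $X(t,\cdot)$ is Riemann integrable because its discontinuity set is Lebesgue-null are the essential tools; the sign bookkeeping across orthants and the harmless extension of $\dot f$ by zero are routine by comparison.
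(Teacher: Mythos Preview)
Your approach differs from the paper's in both parts. For the existence of $V$, the paper first shows that $t\mapsto I_{\D}(t)f$ has a c\`adl\`ag modification (via boundedness on compacts from the lamp property and dominated convergence) and then invokes a regularisation theorem of Fonseca-Mora; you instead realise $V(t)(\omega)$ directly as the regular distribution induced by the locally bounded path $X(t,\cdot)(\omega)$. Since the statement asks only for a $\D^\ast(\O)$-valued process and not for temporal path regularity, your direct construction is adequate and more elementary. For the identity \eqref{eq:IDiffJ}, the paper justifies the interchange by the L\'evy--It\^o decomposition of $M$: classical Fubini for the drift, a trivial finite sum for the large-jump Poisson part, and Walsh's stochastic Fubini theorem for the Gaussian plus compensated small-jump martingale measure. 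Your Riemann-sum double-limit argument avoids all of this machinery, which is attractive.

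There is, however, one genuine gap. You claim that the discontinuities of $X(t,\cdot)(\omega)$ lie on countably many axis-parallel hyperplanes ``determined by the atoms of $M(t,\cdot)$'' and then invoke the Riemann--Lebesgue criterion. But $M(t,\cdot)(\omega)$ is in general only $\sigma$-additive in probability, not a pathwise signed measure, so speaking of its atoms is not justified; and for $d\ge 2$ the discontinuity structure of a general lamp function is not obviously Lebesgue-null without further argument. The fix is easy and keeps your strategy intact: take nested dyadic partitions of $K$ and choose each tag $x_j$ to be the \emph{upper} corner of its cell. Then for Lebesgue-a.e.\ $x\in K$ the tag of the cell containing $x$ decreases monotonically to $x$, and the right-continuity clause of the lamp property gives $X(t,x_j)\to X(t,x)$; since $X(t,\cdot)$ is bounded on $K$, dominated convergence yields the pathwise limit $\sum_j \dot f(x_j)\,\leb(C_j)\,X(t,x_j)\to I_{\D}(t)(\dot f)$ with no appeal to Riemann integrability. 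The same upper-corner tags still make $g_n(y)$ a valid Riemann sum for $\int \dot f(x)\1_{(0,x]}(y)\,{\rm d}x$, so your $L_M$-convergence $g_n\to(-1)^d f$ is unaffected.
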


\begin{proof} 
We  show that, for each $f\in\D(\O)$, the process $(I_{\D}(t)f:t\geq 0)$ has a c\`adl\`ag modification. First we consider a sequence $(t_n)$ decreasing monotonically to some $t\ge 0$.  Let $K$ be the support of $f$. Then, as $(t_n)$ is bounded, there exists a $C>0$ such that $t_n\in[t,t+C]$ for each $n$. The lamp property of $X$ implies that $X$ is bounded on the compact set $[t,t+C]\times K$. Thus, since 
$X(t_n,x)$ converges to $X(t,x)$ in probability for each $x\in\O$, 
Lebesgue's  dominated convergence theorem (for a stochastically convergent sequence) implies 
	\[\lim_{n\to\infty} \int_{\O} f(x)X(t_n,x)\,{\rm d}x
		= \int_{\O} f(x)X(t,x)\,{\rm d}x. 
	\]
A similar argument establishes that the left limits exists.

The existence of the stochastic process $V$ follows from Theorem 3.2 in \cite{Fonseca-Mora2018} (as $\D(\O)$ is nuclear \cite[Theorem~51.5]{Treves1967} and ultrabornological \cite[Page~447]{Narici2011}).	
	
	To show \eqref{eq:IDiffJ} we use ideas from \cite{Dalang2015}. By the fundamental theorem of calculus, as $f$ has compact support,
		\[f(x) = (-1)^d \int_{\O} \dot{f}(y)\mathbbm{1}_{\{y\geq x\}}\,\mathrm{d}y \qquad\text{for all }x\in \O.
		\]
	By utilising an analogue of Fubini's theorem for L\'evy-valued random measures, as detailed below, we obtain 
		\[\begin{split}
	J_{\D}(t)f = \int_{\O} f(x)\,X(t,\mathrm{d}x)
			& = \int_{\O} \left( (-1)^d \int_{\O} \dot{f}(y)\mathbbm{1}_{\{y\geq x\}}\,\mathrm{d}y \right)\,X(t, \mathrm{d}x) \\
			& = (-1)^d \int_{\O} \left( \int_{\O} \mathbbm{1}_{\{y\geq x\}}\,X(t,\mathrm{d}x) \right)\dot{f}(y)\,\mathrm{d}y \\
			& = (-1)^d \int_{\O} X(t, y)\dot{f}(y)\,\mathrm{d}y\,.
		\end{split}\]
	
We now show the analogue of Fubini's theorem to complete the proof. Let $M$ denote the L\'evy-valued random measure corresponding to $X$ according to Theorem \ref{thm:NatAddLevyMeasure}. The L\'evy-It{\^o} decomposition \eqref{eq.Levy-Ito-for-M} yields that $M$ admits the decomposition 
\begin{align*}
M(t,A)= t\gamma(A) + G(t,A) + M_c(t,A) + M_p(t,A)\quad\text{for all $A\in\Borel_b(\R^d)$ and $t\geq 0$.}
\end{align*}
 Here, $\gamma$ is a signed measure,  $G$ is a pure Gaussian L\'evy-valued random measure with characteristics $(0,\Sigma,0)$, and 
\begin{align*}
M_c(t,A):=\int_0^t \int_{A\times B_{\R}} y\, \widetilde{N}(\mathrm{d}s, \mathrm{d}x, \mathrm{d}y), 
\qquad 
M_p(t,A):=\int_0^t \int_{A\times B_{\R}^c} y\, N(\mathrm{d}s, \mathrm{d}x, 
\mathrm{d}y). 
\end{align*}
The classic Fubini theorem may be applied to $\gamma$. The  L\'evy-valued random measure $M_p$ is a finite random sum and the Fubini result holds trivially. 

For $G$ and the compensated Poisson L\'evy-valued random measure $M_c$ we apply Theorem 2.6 in \cite{Walsh1986}. We note that $(G(t,\cdot)+M_c(t,\cdot):\, t\geq 0)$ forms a martingale-valued measure. Furthermore, $G+M_c$ is orthogonal by the independence of the processes $(G(t,A):t\geq 0)$, $(M_c(t,A):t\geq 0)$, $(M_g(t,B):t\geq 0)$ and $(M_c(t,B):t\geq 0)$ whenever $A,B\in\Borel_b(\R^d)$ are disjoint. The covariance process is given by $Q_t(A,B)=t\big(\Sigma(A\cap B)+\int_{(A\cap B)\times B_{\R}}\abs{y}^2\,\nu({\rm d}x,{\rm d}y)\big)$. 
As the required dominating measure $K$ in  \cite[Theorem~2.6]{Walsh1986}
one can choose $K(A\times B\times(0,t])=t\lambda(A\cap B)$. The required integrability condition follows as $f$ is compactly supported and bounded.
\end{proof}

\begin{remark}
According to Proposition \ref{prop:AddSheetOnProduct}, a L\'evy-valued additive sheet $X$ defines a natural additive sheet $(X(t,x):\, t\ge 0,\, x\in\R^d)$. Due to its lamp trajectories, we can define the mapping
\begin{align*}
	I'_{\D}\colon \D((0,\infty)\times \R^d)\to L^0(\Omega,P), \qquad I'_{\D}(f)=\int_{(0,\infty)\times \R^d} f(t,x)X(t,x)\, {\rm d}t\,{\rm d}x. 
\end{align*}
On the other side, one can conclude as in Theorem 
\ref{thm:NatAddLevyMeasure} or by \cite[Theorem\ 4.1]{Pedersen2003}, that there exists an infinitely divisible  random measure $M'$ on $\Borel_b(\Rp\times\R^d)$ satisfying $M((0,z])=X(z)$ for all 
$z\in\Rp\times\R^d$. Thus, as in Remark \ref{re.M-as-both}, we can define  
	\begin{align*}
		J'_{\D}\colon \D((0,\infty)\times \R^d)\to L^0(\Omega,P),\quad 
		J'_{\D}(g)=\int_{(0,\infty)} \int_{\R^d} g(t,x)\, M'({\rm d}t,{\rm d}x). 
	\end{align*}
One can conclude as in the proof of Theorem \ref{th.X-in-D} that there 
exists a genuine random variable $W$ in $\D^\ast((0,\infty)\times \R^d)$ satisfying 
\begin{align*}
		\scapro{g}{W}=I'_{\D}(g)\qquad\text{for all }g\in\D((0,\infty)\times\R^d).
\end{align*}
	Furthermore, we have the equality 
	\begin{align*}
		(-1)^{d} I'_{\D}(\dot{g})=J'_{\D}(g)
		\qquad\text{for all }g\in\D((0,\infty)\times\R^d). 
	\end{align*}
\end{remark}

\section{Cylindrical L\'evy processes}
\label{sec:CLPandLRM}

The concept of cylindrical L\'evy processes is introduced in \cite{Applebaum2010}. It naturally generalises the notation of cylindrical Brownian motion, based on the theory of cylindrical measures and cylindrical random variables.  Here, a cylindrical random variable $Z$ on a Banach space $F$ is a linear and continuous mapping $ Z\colon F^{\ast} \rightarrow L^0(\Omega,P)$, where $F^\ast$ denotes the dual space of $F$. The characteristic function of $Z$ is defined by $\phi_Z (f)=E[\exp(iZf)]$ for all $f\in F^\ast$. In many cases, we will choose $F=L^p(\O,\zeta)$ for some $p\ge 1$ and an arbitrary locally finite Borel measure $\zeta$. In this case $F^\ast=L^{p^\prime}(\O,\zeta)$ for $p^\prime:=\tfrac{p}{p-1}$.

\begin{definition}
A family $(L(t):\, t\ge 0)$ of cylindrical random variables $L(t)\colon F^\ast\to L^0(\Omega,P)$ is called a \emph{cylindrical L\'evy process}  if for all $f_1, ... , f_n \in F^\ast$ and $n\in \mathbb{N}$, the stochastic process $((L(t)f_1, ... , L(t)f_n):\, t \ge 0)$ is a L\'evy process in $\mathbb{R}^n$. 
\end{definition}
The characteristic function of a cylindrical L\'evy process $(L(t):\, t\ge 0)$ is given by
\[\phi_{L(t)} \colon F^\ast \rightarrow \mathbb{C}, \qquad \phi_{L(t)}(f)=\exp\big(t\Psi_L(f)\big),\]
for all $t\ge 0$. Here,  $\Psi_L \colon F^\ast \rightarrow \mathbb{C}$ is called the (cylindrical) symbol of $L$, and is of the form
\begin{align*}
	\Psi_L(f) = ia(f) - \tfrac{1}{2}\langle f, Qf\rangle +\int_F\left(e^{i\langle g, f \rangle}-1-i\langle g, f \rangle \1_{B_{\mathbb{R}}}(\langle g, f \rangle)\right)\mu ({\rm d} g),
\end{align*}
where $a \colon F^\ast \rightarrow \mathbb{R}$ is a continuous mapping with $a(0)=0$,
the mapping $Q \colon F^\ast \rightarrow F^{\ast\ast}$ is a positive, symmetric operator  and $\mu$ is a finitely additive  measure on $\mathcal{Z}(F)$ satisfying
\[\int_F \left( \langle g, f \rangle^2 \wedge 1 \right) \mu({\rm d} g) < \infty \qquad \mathrm{for\;all\;}f \in F^\ast.\]
Here, ${\mathcal Z}(F)$ is the algebra of all sets of the form $\{g\in F:\,(\scapro{g}{f_1},\dots, \scapro{g}{f_n})\in B\}$ for some $f_1,\dots, f_n\in F^\ast$, $B\in\Borel(\R^n\setminus\{0\})$ and $n\in\N$.
We call $(a, Q, \mu)$ the \emph{(cylindrical) characteristics of $L$}. 

\begin{theorem}
\label{th.L-by-M}
	Let $M$ be a L\'evy-valued random measure on $\Borel_b(\O)$ with characteristics $(\gamma,\Sigma,\nu)$ and control measure $\lambda$. If $F$ is a Banach space for which $F^\ast$ is continuously embedded into $L_{M}(\O,\lambda)$, and the simple functions are dense in $F^\ast$, then 
	\begin{align}\label{eq.def-L-by-M}
		L(t)f:=\int_\O f(x)\, M(t,{\rm d}x) \qquad \text{for all } f\in F^\ast, 
	\end{align}
	defines a cylindrical L\'evy processes $L$ in  $F$. In this case, the characteristics $(a,Q,\mu)$ of $L$ is given by
	\begin{align*}
		a(f) &= \int_{\O}f(x)\,\gamma(\mathrm{d}x)
		+ \int_{\O\times\mathbb{R}} f(x)y \big(\mathbbm{1}_{B_{\mathbb{R}}}(f(x)y)-\mathbbm{1}_{B_{\mathbb{R}}}(y) \big) \,\nu(\mathrm{d}x,\mathrm{d}y),\\
		\scapro{Qf}{f} & = \int_{\O}(f(x))^2\,\Sigma(\mathrm{d}x),
\qquad	\qquad	\mu\circ\scapro{f}{\cdot}^{-1} = \nu\circ\chi_{f}^{-1},
	\end{align*}
	for each $f\in F^\ast$,	where $\chi_{f}\colon \O\times\R\to \R$ is defined by  $\chi_{f}(x,y):=f(x)y$.
\end{theorem}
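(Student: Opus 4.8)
The plan is to first verify that $L$ is a cylindrical L\'evy process and then to read off its characteristics by comparing L\'evy symbols. Since $F^*$ is continuously embedded into $L_M(\O,\lambda)$ via some map $\iota$, every $f\in F^*$ is $M$-integrable, so $L(t)f = J(t)(\iota f)$ is well-defined, where $J(t)$ is the integral mapping \eqref{eq:JLMtoL0}. Linearity of $L(t)$ follows from linearity of the integral, and continuity from writing $L(t) = J(t)\circ\iota$ as a composition of continuous maps (the continuity of $J(t)$ is recorded after \eqref{eq:JLMtoL0}); hence each $L(t)$ is a cylindrical random variable on $F$. To see that $L$ is a cylindrical L\'evy process, I would invoke Lemma \ref{lem:JnDimLevy} directly: for $f_1,\ldots,f_n\in F^*\subseteq L_M(\O,\lambda)$ the process $((J(t)f_1,\ldots,J(t)f_n):t\ge 0)=((L(t)f_1,\ldots,L(t)f_n):t\ge 0)$ is a L\'evy process in $\R^n$.

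Next I would compute the cylindrical symbol. For fixed $f\in F^*$, the real-valued process $(L(t)f:t\ge 0)=(J(t)f:t\ge 0)$ is a one-dimensional L\'evy process whose symbol $\Psi_{J(\cdot)f}$ is given explicitly in \eqref{eq:MIntChars}. Since $\phi_{L(t)}(f)=E[\exp(iL(t)f)]=\exp(t\Psi_{J(\cdot)f}(1))$, comparing with $\phi_{L(t)}(f)=\exp(t\Psi_L(f))$ yields $\Psi_L(f)=\Psi_{J(\cdot)f}(1)$, that is
\begin{align*}
\Psi_L(f) = i\int_\O f(x)\,\gamma(\mathrm{d}x) - \tfrac12\int_\O f^2(x)\,\Sigma(\mathrm{d}x) + \int_{\O\times\R}\big(e^{if(x)y}-1-if(x)y\1_{B_\R}(y)\big)\,\nu(\mathrm{d}x,\mathrm{d}y).
\end{align*}

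The main step is then to bring this into the standard L\'evy--Khintchine form of the definition, in which the truncation is $\1_{B_\R}(\scapro{g}{f})$, i.e.\ $\1_{B_\R}(f(x)y)$ after pushing $\nu$ forward along $\chi_f$. The only subtlety is the mismatch between the two cutoffs $\1_{B_\R}(y)$ and $\1_{B_\R}(f(x)y)$: I would add and subtract $if(x)y\1_{B_\R}(f(x)y)$ inside the integral, so that the jump part acquires the required truncation and the residual linear term
\begin{align*}
\int_{\O\times\R} f(x)y\big(\1_{B_\R}(f(x)y)-\1_{B_\R}(y)\big)\,\nu(\mathrm{d}x,\mathrm{d}y)
\end{align*}
is absorbed into the drift. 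This residual integral is finite because $f\in L_M(\O,\lambda)$ (it is exactly the term controlled in the definition of $\Phi_M$ in \eqref{eq.def-Phi-M}), so the stated drift is well-defined. Reading off the three pieces and using uniqueness of the L\'evy--Khintchine representation of the infinitely divisible law of $L(1)f$ then gives $a(f)=\int_\O f\,\mathrm{d}\gamma + \int_{\O\times\R} f(x)y(\1_{B_\R}(f(x)y)-\1_{B_\R}(y))\,\mathrm{d}\nu$, the Gaussian form $\scapro{Qf}{f}=\int_\O f^2\,\mathrm{d}\Sigma$, and the pushforward identity $\mu\circ\scapro{f}{\cdot}^{-1}=\nu\circ\chi_f^{-1}$.

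I expect the main obstacle to be the justification that these three quantities genuinely assemble into cylindrical characteristics of the required type, rather than the symbol computation itself. Concretely, one must check that $\scapro{Qf}{f}=\int_\O f^2\,\mathrm{d}\Sigma$ extends to a positive symmetric operator $Q\colon F^*\to F^{**}$ through the bilinear form $\int_\O fg\,\mathrm{d}\Sigma$, and that the family $\{\nu\circ\chi_f^{-1}:f\in F^*\}$ is the consistent family of finite-dimensional projections of a single cylindrical measure $\mu$ on $\mathcal Z(F)$. Here the density of the simple functions in $F^*$ is the useful hypothesis: on a simple function $f=\sum_k\alpha_k\1_{A_k}$ all objects reduce to finite sums, where consistency and the operator bound are transparent, and continuity of $\iota$ together with continuity of $J(t)$ allows one to pass to the limit on all of $F^*$.
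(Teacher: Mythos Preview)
Your proposal is correct and follows the same approach as the paper: invoke Lemma~\ref{lem:JnDimLevy} to obtain the cylindrical L\'evy process property, then evaluate \eqref{eq:MIntChars} at $u=1$ and rearrange the truncation to read off the characteristics. The paper's proof is in fact much terser than yours---it simply says ``the claimed characteristics follows from \eqref{eq:MIntChars} after rearranging the terms accordingly''---so your add-and-subtract argument for the cutoff mismatch and the remarks on well-definedness via $\Phi_M$ are useful elaborations rather than deviations; your final paragraph on assembling $Q$ and $\mu$ into genuine cylindrical data raises a point the paper does not explicitly address either.
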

\begin{proof}
	Lemma \ref{lem:JnDimLevy} shows that $L$ is a cylindrical L\'evy process in $F$. 
The claimed characteristics follows from \eqref{eq:MIntChars} after rearranging the 
terms accordingly. 	
\end{proof}

The integration theory developed in \cite{Rajput1989} and presented at the end of Section 3 guarantees that \eqref{eq.def-L-by-M} 
is well defined for every $f\in L_M$. However, in order to be in the framework of cylindrical L\'evy processes we need that the domain is the dual of a Banach space (or alternatively a nuclear space). Since the Musielak-Orlicz space $L_M$ is not in general the dual of a Banach space, for the hypothesis of Theorem \ref{th.L-by-M} we require the existence of the Banach space $F$ with $F^\ast$ continuously embedded in $L_M$. If the control measure $\lambda$ of $M$ is finite on $\O$, then  Lemma \ref{lem:L2Embedding} gives us that $L^2(\O,\lambda)$ is continuously embedded in $L_M(\O,\lambda)$. It is possible as illustrated in the following example to relax the condition on finiteness of $\lambda$, but also the same example shows that there are cases where the finiteness of $\lambda$ is necessary for any $L^p$ space to be continuously embedded.

\begin{example}
	We return again to Example \ref{ex.Balan}; let $M$ be the $\alpha$-stable random measure for some $\alpha\in(0,2)$,  where now we consider the domain of definition to be $\Borel_b(\O)$ for a general $\O\in\Borel(\R^d)$. We consider the symmetric case $p=q=\tfrac12$, where the characteristics of $M$ is given by $(0,0,\leb\otimes\nu_{\alpha})$ and the control measure by $\lambda(A)=\tfrac{2}{2-\alpha}\leb(A)$, $A\in\Borel(\O)$. One calculates from \eqref{eq.def-Phi-M} that $L_M(\O,\lambda)= L^\alpha(\O,\leb)$; see \cite[Lemma~4]{Balan2014}. 

Thus, if $\alpha\in (1,2)$ then we can always choose $F=L^{\alpha^\prime}(\O,\leb)$. 
	If $\alpha\in (0,1]$ and $\O$ is bounded we can choose $F=L^p(\O,\leb)$
	for any $p >1$ since $\abs{f(x)}^\alpha\le 1+\abs{f(x)}^{p'}$. However, if $\leb(\O)=\infty$ and $\alpha\leq 1$ then no $L^p$ space is embedded in $L_M$ for $p>1$.
	
Assume $\alpha\in (1,2)$. Then Theorem \ref{th.L-by-M} implies that \eqref{eq.def-L-by-M} defines a cylindrical L\'evy process $L$ in $F=L^{\alpha^\prime}(\O,\leb)$, and its symbol is given by
\begin{align*}
	\Psi_L(f)
	&=i\int_{\O\times\R}f(x)y\big(\1_{B_{\R}}(f(x)y)-\1_{B_{\R}}(y) \big) \,{\rm d}x\nu_{\alpha}({\rm d}y)\\
	&\qquad\qquad + \int_{\R} \left(e^{iu}-1-iu\1_{B_{\R}}(u)\right)\,(\mu\circ\scapro{f}{\cdot}^{-1})({\rm d}u)\\
	&=\int_{\O\times\R}\left(e^{if(x)y}-1-if(x)y\mathbbm{1}_{B_{\mathbb{R}}}(y)\right)\,
		{\rm d}x\nu_{\alpha}({\rm d}y)
= -C_\alpha \norm{f}_{L^\alpha(\O,\leb)}^\alpha, 
\end{align*}
where $C_{\alpha}=\frac{\Gamma(2-\alpha)}{1-\alpha}\cos\frac{\pi\alpha}{2}$ if $\alpha\neq 1$ and $C_\alpha=\frac{\pi}{2}$ if $\alpha=1$.
\end{example}


We now turn to the question of which cylindrical L\'evy processes induce L\'evy-valued random measures. 
For this purpose we introduce the following:
\begin{definition}
\label{def:IndScatt}
A cylindrical L\'evy process $(L(t):\, t\ge 0)$ in $L^p(\O,\zeta)$ for some $p\ge 1$ is called \textup{\textbf{independently scattered}} if for any disjoint sets $A_1, \ldots ,A_n \in \Borel_b(\mathcal{O})$ and $n\in\mathbb{N}$, the random variables $L(t)\mathbbm{1}_{A_1},\ldots ,L(t)\mathbbm{1}_{A_n}$ are independent for each $t\geq 0$.
\end{definition}

\begin{theorem}
\label{prop:CLPtoIDRM}
An independently scattered cylindrical L\'evy process $(L(t):t\geq 0)$ in $L^p(\mathcal{O},\zeta)$ for some $p\ge 1$ defines by 
\begin{align}\label{eq.L-defines-M}
M(t,A):= L(t)\mathbbm{1}_A
\qquad\text{for all }t\geq 0, A\in \Borel_b(\O), 
\end{align}
a L\'evy-valued random measure $M$ on $\Borel_b(\mathcal{O})$.
\end{theorem}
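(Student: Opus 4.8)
The plan is to verify the two requirements of Definition \ref{def:LWNRM} directly: that for each fixed $t\ge 0$ the set function $A\mapsto M(t,A)=L(t)\1_A$ is an infinitely divisible random measure in the sense of Definition \ref{def:LevyTypeMeasure}, and that all finite-dimensional marginal processes are L\'evy processes. First I would note that each $A\in\Borel_b(\O)$ is relatively compact, so $\zeta(A)<\infty$ and $\1_A\in L^{p'}(\O,\zeta)=F^\ast$; thus $M(t,A)$ is well defined. The L\'evy-process requirement is then immediate, since taking $f_k=\1_{A_k}$ in the definition of a cylindrical L\'evy process shows that $\big((M(t,A_1),\dots,M(t,A_n)):t\ge 0\big)$ is a L\'evy process in $\R^n$ for any $A_1,\dots,A_n\in\Borel_b(\O)$. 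Taking $n=1$ exhibits each $(M(s,A):s\ge 0)$ as a one-dimensional L\'evy process, so $M(t,A)$ is infinitely divisible, giving Definition \ref{def:LevyTypeMeasure}(c); and Definition \ref{def:LevyTypeMeasure}(a) is precisely the independent-scattering hypothesis of Definition \ref{def:IndScatt}.

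The substantive point is the $\sigma$-additivity in Definition \ref{def:LevyTypeMeasure}(b). Finite additivity is immediate from the linearity of $L(t)$ together with $\1_{A_1\cup\cdots\cup A_n}=\sum_{k=1}^n\1_{A_k}$ for disjoint sets. For a countable disjoint union $A=\bigcup_{k}A_k\in\Borel_b(\O)$ I would set $C_n:=\bigcup_{k>n}A_k$, so that $M(t,A)=\sum_{k=1}^n M(t,A_k)+M(t,C_n)$ by finite additivity, and then show the remainder tends to zero. Since $C_n\downarrow\emptyset$ with $\zeta(C_1)\le\zeta(A)<\infty$, continuity from above gives $\norm{\1_{C_n}}_{L^{p'}(\O,\zeta)}^{p'}=\zeta(C_n)\to 0$, and the continuity of the cylindrical random variable $L(t)\colon L^{p'}(\O,\zeta)\to L^0(\Omega,P)$ yields $M(t,C_n)=L(t)\1_{C_n}\to 0$ in probability. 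Hence the partial sums $\sum_{k=1}^n M(t,A_k)$ converge to $M(t,A)$ in probability.

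The hard part is then to upgrade this to the almost-sure convergence that Definition \ref{def:LevyTypeMeasure}(b) demands, and, more delicately, to secure the vanishing of the remainder in the first place. For the upgrade I would invoke the classical equivalence theorem for series of independent random variables (L\'evy / It\^o--Nisio): since the summands $M(t,A_k)$ are independent by part (a), convergence in probability of $\sum_k M(t,A_k)$ forces almost-sure convergence, so $\sum_k M(t,A_k)=M(t,A)$ $P$-a.s.; this is a standard tool, and alternatively one could extend $M(t,\cdot)$ to a $\sigma$-additive random measure by Theorem 2.15 in \cite{Kallenberg2017}, exactly as in the proof of Proposition \ref{prop:LVRMisISRM}. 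The genuinely delicate step is the continuity at $\emptyset$, i.e.\ $M(t,C_n)\to 0$: my argument rests on $\norm{\1_{C_n}}_{L^{p'}}^{p'}=\zeta(C_n)\to 0$, which requires $p'<\infty$, that is $p>1$. In the boundary case $p=1$ one has $F^\ast=L^\infty(\O,\zeta)$, where indicators of shrinking sets do not converge in norm and the continuity of $L(t)$ no longer pins down the remainder; I therefore expect $p=1$ to be the real obstacle, requiring an additional argument or hypothesis, for instance a symmetrisation estimate controlling the Gaussian and jump parts together with a separate treatment of the drift.
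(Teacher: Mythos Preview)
Your argument is essentially the paper's own proof: well-definedness of $M(t,A)$, infinite divisibility from the one-dimensional L\'evy process, independent scattering from the hypothesis, the L\'evy property of the finite-dimensional marginals from the definition of a cylindrical L\'evy process, and $\sigma$-additivity via linearity plus continuity of $L(t)$ followed by the upgrade from convergence in probability to almost-sure convergence using independence of the summands. The paper phrases the $\sigma$-additivity step as $L(t)\1_{\bigcup_{k\le n}A_k}\to L(t)\1_A$, which is exactly your remainder argument $L(t)\1_{C_n}\to 0$.

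Your observation about $p=1$ is a genuine point, and in fact the paper's proof does not address it either: it simply writes ``by the linearity and continuity of $L(t)$'' without distinguishing the case $F^\ast=L^\infty(\O,\zeta)$, where $\1_{C_n}$ fails to converge in norm. So you have not introduced a new gap; you have correctly identified a lacuna in the stated generality $p\ge 1$. Your suggestion to fall back on Theorem 2.15 in \cite{Kallenberg2017} would not by itself help, since that extension principle still needs continuity at $\emptyset$, which is precisely what is in question. If one wishes to rescue $p=1$, an additional argument is indeed required---for instance, working with the weak$^\ast$ topology on $L^\infty$ (if $L(t)$ is assumed sequentially weak$^\ast$-continuous), or restricting the statement to $p>1$.
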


\begin{proof}
	For each $t\geq 0$, the map $M(t,\cdot)\colon \Borel_b(\mathcal{O})\to L^0(\Omega,P)$ is well-defined and $M(t,A)$ is an infinitely divisible random variable for each $A\in\Borel_b(\O)$. Let $(A_k)_{k\in\mathbb{N}}$ be a sequence of disjoint sets in $\Borel_b(\mathcal{O})$ such that $A:=\bigcup_{k\in\N}A_k\in\Borel_b(\mathcal{O})$. Then, for each $t\geq 0$, by the linearity and continuity of $L(t)$ we have
	\[M(t,A) 
		= \lim_{n\to\infty} L(t)\mathbbm{1}_{\bigcup_{k=1}^n A_k} 
		= \lim_{n\to\infty} \sum_{k=1}^n L(t)\mathbbm{1}_{A_k}
		= \lim_{n\to\infty} \sum_{k=1}^n M(t,A_k)\,,
	\]
	with the limit in probability and thus almost surely by independence. Clearly,  $M(t,\cdot)$ is independently scattered for each $t\ge 0$, and $\big(M(\cdot, A_1),\ldots,M(\cdot,A_n)\big)$ is a L\'evy process for each $A_1,\ldots A_n\in\Borel_b(\O)$.
\end{proof}

\begin{theorem}
\label{thm:IndScatChars}
	Let $(L(t):\, t\geq 0)$ be a cylindrical L\'evy process in $L^p(\mathcal{O},\zeta)$ for some $p\ge 1$. Then $L$ is independently scattered if and only if its  symbol is of the form 
	\begin{equation}
	\label{eq:IndScatChars}
		\begin{split}
			\Psi_L(f)
			& = i \int_{\mathcal{O}} f(x) \,\gamma(\mathrm{d}x)
			- \tfrac{1}{2} \int_{\mathcal{O}} f^2(x) \,\Sigma(\mathrm{d}x) \\
			& + \int_{\mathcal{O}\times\mathbb{R}} \left(e^{if(x)y}-1-if(x)y\mathbbm{1}_{B_{\mathbb{R}}}(y)\right) \,\nu(\mathrm{d}x,\mathrm{d}y)
			\,,\quad f\in L^{p'}(\mathcal{O},\zeta),
		\end{split}
	\end{equation}	
for a signed measure $\gamma$ on $\Borel_b(\mathcal{O})$, a measure $\Sigma$ on $\Borel_b(\mathcal{O})$ and a $\sigma$-finite measure $\nu$ on $\Borel(\mathcal{O}\times\mathbb{R})$ such that for each $B\in\Borel_b(\mathcal{O})$, $\nu(B\times\cdot)$ is a L\'evy measure on $\mathbb{R}$.
\end{theorem}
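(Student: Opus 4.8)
The plan is to prove the two implications separately, since they are of very different character. The implication from the symbol form \eqref{eq:IndScatChars} to independent scattering I would establish by a direct factorisation of the characteristic function. Fix disjoint $A_1,\dots,A_n\in\Borel_b(\O)$ and $u_1,\dots,u_n\in\R$; then $f:=\sum_{j=1}^n u_j\1_{A_j}$ lies in $L^{p'}(\O,\zeta)$, and linearity of $L(t)$ gives $\sum_{j=1}^n u_j L(t)\1_{A_j}=L(t)f$, so that the joint characteristic function of $\big(L(t)\1_{A_1},\dots,L(t)\1_{A_n}\big)$ equals $\exp\big(t\Psi_L(f)\big)$. Substituting this simple $f$ into \eqref{eq:IndScatChars}, using that $f$ takes the constant value $u_j$ on $A_j$ and that the jump integrand vanishes where $f=0$, the three integrals split over the disjoint pieces $A_j$, whence $\Psi_L(f)=\sum_{j=1}^n\Psi_j(u_j)$ with each summand depending on a single $u_j$. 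Exponentiating factorises the joint characteristic function into the product of the marginals, which is exactly the asserted independence.

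For the converse I would start from Theorem \ref{prop:CLPtoIDRM}, which turns the hypothesis of independent scattering into the existence of a L\'evy-valued random measure $M(t,A):=L(t)\1_A$ on $\Borel_b(\O)$; let $(\gamma,\Sigma,\nu)$ and $\lambda$ be its characteristics and control measure. A first useful observation is that $\lambda\ll\zeta$: if $\zeta(A)=0$ then $\1_A=0$ in $L^{p'}(\O,\zeta)$, hence $M(t,A)=L(t)\1_A=0$ almost surely, forcing $\gamma$, $\Sigma$ and $\nu(A\times\cdot)$, and therefore $\lambda$, to vanish on $A$. On a simple function $f=\sum_k\alpha_k\1_{A_k}$ with disjoint $A_k\in\Borel_b(\O)$, linearity together with the definition \eqref{eq.def-intM-simple} gives $L(t)f=\sum_k\alpha_k M(t,A_k)=\int_\O f\,M(t,\mathrm dx)$, so that the L\'evy symbol recorded in \eqref{eq:MIntChars}, evaluated at $u=1$, already yields \eqref{eq:IndScatChars} on simple functions.

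It remains to extend this identity to every $f\in L^{p'}(\O,\zeta)$, which is the heart of the matter. For $p>1$ the simple functions built from sets of $\Borel_b(\O)$ are dense in $L^{p'}(\O,\zeta)$, so I would choose such $f_n\to f$ and, passing to a subsequence, arrange $f_n\to f$ pointwise $\zeta$-a.e., hence $\lambda$-a.e.\ by the absolute continuity above. To verify that $f$ is $M$-integrable I would check condition (2) of the definition of integrability: for each $A\in\Borel(\O)$ one has $\int_A f_n\,M(t,\mathrm dx)=L(t)(f_n\1_A)$, and since $\norm{f_n\1_A-f\1_A}_{L^{p'}}\le\norm{f_n-f}_{L^{p'}}\to0$, continuity of $L(t)\colon L^{p'}(\O,\zeta)\to L^0(\Omega,P)$ yields convergence in probability of these integrals. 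Thus $f\in L_M(\O,\lambda)$ and $\int_\O f\,M(t,\mathrm dx)=L(t)f$, and the symbol \eqref{eq:MIntChars} at $u=1$ then delivers \eqref{eq:IndScatChars} for $f$.

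The main obstacle is precisely this last extension. For $p>1$ the argument above is clean once the absolute continuity $\lambda\ll\zeta$ and the continuity of $L(t)$ are in hand; the genuinely awkward case is $p=1$, where $p'=\infty$ and the $\Borel_b$-simple functions are no longer dense in $L^\infty(\O,\zeta)$, so bounded functions with non-relatively-compact support (such as $\1_\O$ when $\O$ is unbounded) cannot be reached by an $L^\infty$-convergent sequence of simple functions. There the identity must instead be obtained by a separate truncation argument, or under the understanding that the relevant integrals in \eqref{eq:IndScatChars} do converge, which is implicitly guaranteed by $L$ being a well-defined cylindrical L\'evy process on all of $L^\infty(\O,\zeta)$. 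Establishing $M$-integrability uniformly in this regime, rather than merely convergence of $\Psi_L(f_n)$, is the step I would expect to require the most care.
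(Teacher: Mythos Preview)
Your proposal is correct and follows essentially the same route as the paper: the converse is a direct factorisation of the characteristic function on simple functions, and the forward direction proceeds via Theorem~\ref{prop:CLPtoIDRM}, the observation that $\zeta$-null sets are $\lambda$-null, the identity $L(t)f=\int_\O f\,M(t,\mathrm dx)$ on simple functions, and an approximation in $L^{p'}$ combined with pointwise convergence to invoke~\eqref{eq:MIntChars}. Your flag on the case $p=1$ is well taken---the paper's own proof assumes a sequence of simple functions converging to $f$ ``both pointwise $\zeta$-almost everywhere and in $L^{p'}(\O,\zeta)$'' without further comment, so the density issue you raise is not resolved there either.
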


\begin{proof} 
	If $L$ is independently scattered then 
	Theorem \ref{prop:CLPtoIDRM} implies that $L$ defines a L\'evy-valued random measure $M$ by \eqref{eq.L-defines-M}. Denote the characteristics of $M$ by $(\gamma,\Sigma,\nu)$ and its control measure by $\lambda$. For a simple function $f$ of the form \eqref{eq.def-simple} we obtain
	\begin{align}\label{eq.L-and-M}
		L(t)(\1_A f) &= \sum_{i=1}^n \alpha_i L(t)(\1_{A_i}\1_A )
		= \sum_{i=1}^n \alpha_i M(t,A_i\cap A) 
		= \int_{A} f(x) \,M(t,{\rm d}x)\,.
	\end{align}
For an arbitrary function $f\in L^{p'}(\O,\zeta)$ let $(f_n)_{n\in\N}$ be a sequence of simple functions 
converging to $f$ both pointwise $\zeta$-almost everywhere and in  $L^{p'}(\O,\zeta)$. We note that, as $L(t)\1_A=0$ whenever $\zeta(A)=0$, $\zeta$-null sets have null $\lambda$-measure, and thus we have $f_n\to f$ pointwise $\lambda$-almost everywhere. Since $L(t)\1_Af_n \to L(t)\1_Af$ in probability for each $A\in \Borel(\O)$, it follows from \eqref{eq.L-and-M} that $f\in L_M(\O,\lambda)$ and $L(t)f=\int_{\O}f(x)\,M(t,{\rm d}x)$. We obtain the stated form of the characteristic function of $L$ by \eqref{eq:MIntChars}.
	
	Conversely, if the L\'evy symbol is given by \eqref{eq:IndScatChars}, then this form implies for any disjoint sets $A_1,\ldots , A_n \in\Borel_b(\O)$ that 
	\begin{align*}
		\Psi_L\left(\sum_{k=1}^n u_k \1_{A_k}\right)
		=\sum_{k=1}^n \Psi_L(u_k\1_{A_k})
		\qquad\text{for all }u_1,\dots, u_n\in\R.
	\end{align*}
	Consequently, we obtain for the characteristic function of the random vector 
	$X:=(L(1)\1_{A_1},\dots ,L(1)\1_{A_n})$ for all $u=(u_1,\ldots u_n)\in\mathbb{R}^n$, that
	\begin{align*}
		\phi_X(u)=\phi_{L(1)}(u_1\mathbbm{1}_{A_1}+\cdots +u_n\mathbbm{1}_{A_n})
		&=e^{i\Psi_L(u_1\mathbbm{1}_{A_1}+\cdots +u_n\mathbbm{1}_{A_n})}\\
		&=\phi_{L(1)\mathbbm{1}_{A_1}}(u_1) \cdots \phi_{L(1)\mathbbm{1}_{A_n}}(u_n),
	\end{align*}
which shows that $L$ is independently scattered.
\end{proof}

Applying Theorem \ref{prop:CLPtoIDRM} to a given cylindrical L\'evy process $L$ on $L^p(\O,\zeta)$ gives the corresponding L\'evy valued random measure $M$, say with control measure $\lambda$. The first part of the proof of Theorem \ref{thm:IndScatChars} shows that $L^p(\O,\zeta)$ is a subspace of $L_M(\O,\lambda)$. The following result guarantees that the embedding is continuous in non-degenerated cases. 
\begin{proposition}
	Let $L$ be an independently scattered cylindrical L\'evy process in $L^p(\O,\zeta)$ with symbol of the form \eqref{eq:IndScatChars} and $M$ the corresponding L\'evy-valued random measure with control measure $\lambda$. If the measures $\gamma,\Sigma$ and $\nu$ are such that for each $A\in\Borel_b(\O)$ with $\Sigma(A)=0$ and $\nu(A\times B)=0$ for each $B\in\Borel(\R)$ bounded away from $0$, we have $\norm{\gamma}_{\rm TV}(A)=0$, then  $L^{p'}(\O,\zeta)$ is continuously embedded into $L_M(\O,\lambda)$. 
\end{proposition}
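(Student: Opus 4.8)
The plan is to prove continuity of the inclusion $\iota\colon L^{p'}(\O,\zeta)\to L_M(\O,\lambda)$ at the origin. Both are complete metric linear spaces — $L^{p'}(\O,\zeta)$ is a Banach space and $L_M(\O,\lambda)$ is complete and translation invariant as recorded after \eqref{eq.def-Phi-M} — and the inclusion is already known to be well defined from the first part of the proof of Theorem \ref{thm:IndScatChars}. Mirroring the final step of Lemma \ref{lem:L2Embedding}, I would reduce the claim to showing that $\int_{\O}\Phi_M(\abs{f_n(x)},x)\,\lambda(\mathrm{d}x)\to 0$ whenever $f_n\to 0$ in $L^{p'}(\O,\zeta)$, since convergence of this modular forces convergence in $L_M(\O,\lambda)$. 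I would then split $\Phi_M$ according to \eqref{eq.def-Phi-M} into its Gaussian part $\abs{f_n}^2 g$, its jump part $\int_{\R}(1\wedge\abs{f_n y}^2)\,\rho(\cdot,\mathrm{d}y)$, and its drift part $\sup_{\abs{c}\le 1}\abs{U(c\abs{f_n},\cdot)}$, and treat the three contributions separately.

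For the Gaussian and jump contributions I would exploit that, by hypothesis, $L$ is a genuine cylindrical L\'evy process on $L^{p'}(\O,\zeta)$, so that each $L(t)\colon L^{p'}(\O,\zeta)\to L^0(\Omega,P)$ is continuous; hence $f_n\to 0$ forces $\Psi_L(uf_n)\to 0$ for every $u\in\R$, with $\Psi_L$ given by \eqref{eq:IndScatChars}. Taking real parts, the two non-positive summands must vanish individually, giving $\int_{\O} f_n^2\,\Sigma(\mathrm{d}x)\to 0$ and $\int_{\O\times\R}(1-\cos(uf_n(x) y))\,\nu(\mathrm{d}x,\mathrm{d}y)\to 0$ for every $u$. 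Integrating the latter over $u\in[-1,1]$ and using the elementary bound $2-2\tfrac{\sin\vartheta}{\vartheta}\ge c(\vartheta^2\wedge 1)$ would yield $\int_{\O\times\R}(1\wedge\abs{f_n(x) y}^2)\,\nu(\mathrm{d}x,\mathrm{d}y)\to 0$, which controls the jump part of $\Phi_M$; the auxiliary integrals over $\O\times B_{\R}^c$ appearing in \eqref{eq.summary-Phi} I would dispatch by the convergence-in-$\nu$-measure argument already used in the continuity part of Lemma \ref{lem:L2Embedding}.

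The drift part is the main obstacle. After applying Lemma 2.8 in \cite{Rajput1989} as in \eqref{eq.int-sup-U}, what survives beyond jump integrals is $\int_{\O}\abs{U(\abs{f_n},x)}\,\lambda(\mathrm{d}x)$, whose leading term is $\int_{\O}\abs{f_n}\,\norm{\gamma}_{\rm TV}(\mathrm{d}x)$. The imaginary part of $\Psi_L$ only supplies the \emph{signed} quantity $\int_{\O} f_n\,\gamma(\mathrm{d}x)$, and, worse, for $f_n\in L_M(\O,\lambda)$ the total-variation integral $\int_{\O}\abs{f_n}\,\norm{\gamma}_{\rm TV}$ need not a priori be finite, since membership in $L_M$ controls only the \emph{combined} expression $U$ rather than its drift and jump-compensation parts separately. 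This is exactly the role of the non-degeneracy hypothesis, which states $\norm{\gamma}_{\rm TV}(A)=0$ whenever $\Sigma(A)=0$ and $\nu(A\times B)=0$ for all $B$ bounded away from $0$, i.e. $\norm{\gamma}_{\rm TV}\ll\Sigma+\int_{\R}(\abs{y}^2\wedge 1)\,\nu(\cdot,\mathrm{d}y)$, forcing the drift to live where the Gaussian and jump parts live. Using this domination I would test the symbol against $g_n:=\abs{f_n}\,\mathrm{sign}(a)$, where $a=\tfrac{\mathrm{d}\gamma}{\mathrm{d}\lambda}$: since $\abs{g_n}=\abs{f_n}$, the sequence $g_n$ still tends to $0$ in $L^{p'}(\O,\zeta)$ and has the same Gaussian and jump integrals as $f_n$, while $\int_{\O} g_n\,\gamma(\mathrm{d}x)=\int_{\O}\abs{f_n}\,\norm{\gamma}_{\rm TV}(\mathrm{d}x)$; reading the imaginary part of $\Psi_L(ug_n)$ and subtracting the jump compensation (controlled as above) would then give $\int_{\O}\abs{f_n}\,\norm{\gamma}_{\rm TV}\to 0$. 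The delicate point — and where I expect the real work to lie — is reconciling the finiteness and separation of the drift from the jump compensation, which is precisely what the non-degeneracy assumption secures. Feeding the three vanishing contributions back into \eqref{eq.summary-Phi} yields $\int_{\O}\Phi_M(\abs{f_n},x)\,\lambda(\mathrm{d}x)\to 0$ and hence the asserted continuous embedding.
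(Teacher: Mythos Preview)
Your strategy is fundamentally different from the paper's. You try to verify modular convergence $\int_{\O}\Phi_M(|f_n|,x)\,\lambda(\mathrm{d}x)\to 0$ directly by decomposing $\Phi_M$ and controlling each piece via the continuity of $f\mapsto\Psi_L(f)$. The paper instead uses the closed graph theorem: writing $L(t)=J(t)\circ\iota$ with $J(t)$ the integral operator \eqref{eq:JLMtoL0}, one shows that the hypothesis forces $\ker J(t)=\{0\}$; then if $f_n\to f$ in $L^{p'}$ and $\iota f_n\to g$ in $L_M$, continuity of both $L(t)$ and $J(t)$ gives $J(t)g=L(t)f=J(t)\iota f$, whence injectivity yields $g=\iota f$ and the closed graph theorem finishes. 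The non-degeneracy assumption enters only \emph{qualitatively}, to show that $J(t)f=0$ forces $f=0$ $\lambda$-a.e.\ (by reading off the real part of the characteristic function, both non-positive summands must vanish, so $f=0$ $\Sigma$-a.e.\ and $f(x)y=0$ $\nu$-a.e., and then the hypothesis kills $\|\gamma\|_{\rm TV}$ on $\{f\neq 0\}$).

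Your approach, by contrast, tries to extract \emph{quantitative} control from the hypothesis, and this is where the gap lies. The step ``subtracting the jump compensation (controlled as above)'' is not justified: after your $g_n$-trick, the imaginary part of $\Psi_L(g_n)$ equals $\int_{\O}|f_n|\,\|\gamma\|_{\rm TV}(\mathrm{d}x)+\int_{\O\times\R}\big(\sin(g_ny)-g_ny\1_{B_\R}(y)\big)\,\nu(\mathrm{d}x,\mathrm{d}y)$, and to isolate the first summand you must show the second tends to zero. But knowing only $\int(1\wedge|g_ny|^2)\,\nu\to 0$ does not control the region $\{|y|\le 1,\ |g_n(x)y|>1\}$, where the integrand behaves like $|g_ny|$ rather than like $1\wedge|g_ny|^2$; you have not shown the two pieces are even separately finite. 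Your reading of the hypothesis as $\|\gamma\|_{\rm TV}\ll\Sigma+\int(|y|^2\wedge 1)\,\nu(\cdot,\mathrm{d}y)$ is correct, but absolute continuity alone gives no bound on the Radon--Nikodym density and hence no direct estimate of $\int|f_n|\,\mathrm{d}\|\gamma\|_{\rm TV}$ from the Gaussian and jump integrals. A secondary issue is that your appeal to ``the convergence-in-$\nu$-measure argument already used in Lemma~\ref{lem:L2Embedding}'' tacitly assumes $\nu(\O\times B_\R^c)<\infty$ --- equivalently $\lambda(\O)<\infty$ --- which is part of the hypothesis of that lemma but not of the present proposition. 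The paper's closed-graph route sidesteps both difficulties entirely.
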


\begin{proof}
	By the first part of the proof of Theorem \ref{thm:IndScatChars} we have $L^{p'}(\O,\zeta)\subseteq L_M(\O,\lambda)$, and, furthermore, the canonical injection $\iota\colon L^{p'}(\O,\zeta)\to L_M(\O,\lambda)$ is well defined, as the $\zeta$-equivalence class of $f$ is a subset of the $\lambda$-equivalence class of $f$. For each $t\geq 0$ we consider the operator $J(t)\colon L_M(\O,\lambda)\to L^0(\Omega,P)$ defined in \eqref{eq:JLMtoL0} and we see that $L(t)$ satisfies the factorisation $L(t) = J(t)\circ\iota$. 

For establishing $\ker(J(t))=\{0\}$,  let $f\in L_M(\O,\lambda)$ satisfy $J(t)f=0$. Then, by considering only the real part of the characteristic function of $J(t)f$, we have for every $u\in\R$
	\begin{align}
	\label{eq:JtImaginary}
		- \tfrac{1}{2}u^2 \int_{\mathcal{O}} f^2(x) \,\Sigma(\mathrm{d}x) 
		+ \int_{\mathcal{O}\times\mathbb{R}} \big(\cos(uf(x)y)-1\big) \,\nu(\mathrm{d}x,\mathrm{d}y)
		= 0. 
	\end{align}
As both terms are non-positive, we obtain that $f=0$ $\Sigma$-a.e.\ and the function $z(x,y):=f(x)y$ satisfies $z=0$ $\nu$-a.e. In particular, for the set $A:=\{x\in\O:f(x)\neq 0\}$ we have $\Sigma(A)=0$ and $\nu(A\times B)=0$ for any $B\in\Borel(\R)$ bounded away from $0$. The hypothesis on $\gamma$ thus leads to $\lambda(A)=0$, which shows $\ker(J(t))=\{0\}$. 

Let $(f_n)$ be a sequence in $L^{p'}(\O,\zeta)$ converging to $f\in L^{p'}(\O,\zeta)$ and assume 
that $\iota f_n$ converges to some $g\in L_M(\O,\lambda)$.   As $\lim_{n\to\infty} J(t)(\iota f_n) = J(t)g$ and $\lim_{n\to\infty} L(t)f_n = L(t)f = J(t)(\iota f)$, we derive $J(t)(g-\iota f)=0$. 
Since $J(t)$ is injective, we conclude $g=\iota f$ $\lambda$-a.e., and the closed graph theorem implies the continuity of $\iota$.
\end{proof}

\begin{example}
Peszat and Zabczyk in \cite[Section~7.2]{Peszat2007} define the impulsive cylindrical process in $L^2(\mathcal{O},\Borel(\mathcal{O}),\zeta)$ by 
	\[L(t)f:=\int_0^t\int_{\O}\int_{\R} f(x)y\,\widetilde{N}({\rm d}s,{\rm d}x,{\rm d}y),
	\]
	where $N$ is a Poisson random measure on $\Rp\times\O\times\R$ with intensity $\leb\otimes\zeta\otimes\mu$ for a L\'evy measure $\mu$ on $\Borel(\R)$; see also \cite[Ex.\ 3.6]{Applebaum2010}. Since its symbol is given by 
 	\[\Psi_L(f) =
		\int_{\mathcal{O}}\int_{\mathbb{R}} [e^{if(x)y}-1-if(x)y] \,\mu(\mathrm{d}y)\zeta(\mathrm{d}x),
	\]
Theorem \ref{thm:IndScatChars} guarantees that $L$ is independently scattered. 
\end{example}

Finally, we note that the class of independently scattered cylindrical L\'evy processes is a strict subclass, as the following counter-example shows:
\begin{example}
Let $(\ell_k)_{k\in\N}$ be a sequence of independent, identically distributed, real-valued L\'evy processes. Assume for simplicity that $\ell_k$ is symmetric with characteristics $(0,0,\bar{\nu})$, with $\bar{\nu}\neq 0$, and satisfies $E[\abs{\ell_k(1)}^2]<\infty$. Let $(e_k)_{k\in\N}$ be an orthonormal basis of $L^2((0,1),\leb)$ such that $e_1\equiv 1$ (such bases include the standard polynomial and trigonometric bases). 
It follows from Lemma 4.2 in \cite{Riedle2015} that 
\[L(t)f:= \sum_{k=1}^{\infty} \scapro{f}{e_k} \ell_k(t),
\qquad \text{for all }f\in L^2((0,1),\leb), 
	\]
defines a cylindrical L\'evy process $L$, say with characteristics $(0,0,\mu)$. 

Assume for a contradiction that $L$ is independently scattered and fix two disjoint sets $A,B\in\Borel((0,1))$ with $\leb(A)>0$ and $\leb(B)>0$. Thus, $\scapro{\1_A}{e_1}=\leb(A)>0$ and $\scapro{\1_B}{e_1}=\leb(B)>0$. 

The L\'evy measure of the 
L\'evy process $\big((L(t)\1_A,L(t)\1_B):\, t\ge 0\big)$  in $\R^2$ is given by 
$\mu\circ \pi_{\1_A,\1_B}^{-1}$. As $L(1)\1_A$ and $L(1)\1_B$ are independent, it follows from the uniqueness of the characteristic functions that 
\begin{align*}
\mu\circ \pi_{\1_A,\1_B}^{-1}
=((\mu\circ\pi_{\1_A}^{-1})\otimes \delta_0)
+(\delta_0\otimes \mu\circ\pi_{\1_B}^{-1})),
\end{align*}
where $\mu\circ\pi_{\1_A}^{-1}$ is the L\'evy measure of $(L(t)\1_A:\, t\ge 0)$ and 
$\mu\circ\pi_{\1_B}^{-1}$ is the L\'evy measure of $(L(t)\1_B:\, t\ge 0)$.
It follows in particular that 
\begin{align}\label{eq.2-leb=0}
\mu\circ \pi_{\1_A,\1_B}^{-1}\big( \R\setminus \{0\} \times  \R\setminus\{0\}\big)=0. 
\end{align}
On the other hand, Lemma 4.2 in \cite{Riedle2015} implies that 
\begin{align*}
\mu\circ \pi_{\1_A,\1_B}^{-1}
= \sum_{k=1}^{\infty}(\bar{\nu}\circ r_k^{-1}), 
\end{align*}
where $r_k\colon \R\to\R^2$ is defined by $r_k(x)=\big(\scapro{\1_A}{e_k}x,\, 
\scapro{\1_B}{e_k}x\big)$.  
It follows from \eqref{eq.2-leb=0} that 
\begin{align*}
0=\big(\bar{\nu}\circ r_1^{-1}\big)\big( \R\setminus \{0\} \times  \R\setminus\{0\}\big)=\bar{\nu}(\R\setminus\{0\})>0\, ,
\end{align*}
which results in a contradiction. 
\end{example}



\end{document}